\newtheorem{theorem}{Theorem}[section]
\newtheorem*{theorem*}{Theorem}
\newtheorem{lemma}[theorem]{Lemma}
\newtheorem{proposition}[theorem]{Proposition}
\newtheorem{def-prop}[theorem]{Definition-Proposition}
\theoremstyle{remark}
\newtheorem{remark}[theorem]{Remark}
\theoremstyle{definition}
\theoremstyle{notation}
\theoremstyle{example}
\newtheorem{example}[theorem]{Example}
\numberwithin{equation}{section}
\newcommand{\Hom}{\mathrm{Hom}}
\newcommand{\Endo}{\mathrm{End}}
\newcommand{\id}{\mathrm{id}}
\newcommand{\boyut}[1]{\mathcal{#1}}
\newcommand{\oneC}{\boyut C}
\newcommand{\oneD}{\boyut D}
\newcommand{\oneH}{\boyut H}
\newcommand{\ra}{{\rightarrow}}
\newcommand{\la}{{\leftarrow}}
\newcommand{\Ra}{{\Rightarrow}}
\newcommand{\gact}{\mathcal{A}ct}
\newcommand{\grp}{\mathcal{G}rp}
\newcommand{\xmod}{\mathcal{X}\mathrm{Mod}}
\newcommand{\Xmod}{\mathbb{X}\mathrm{Mod}}
\newcommand{\AXMod}{\mathcal{X}\mathrm{Act}}
\newcommand{\Aut}{\mathrm{Aut}}
\newcommand{\AAut}{\mathcal{A}}
\newcommand{\Con}{\mathsf{C}}
\newcommand{\Wseq}{\mathcal{W}\mathrm{Seq}}
\newcommand{\Xsq}{\mathcal{X}\mathrm{Sq}}
\title{The category of crossed modules of crossed modules and its associated double groupoids}
\author{Nelson Martins-Ferreira}
\address{School of Technology and Management - ESTG\\Centre for rapid and Sustainable Product Developement-CDRSP, Polytechnic Institute of Leiria, P-2411-901, Leiria, Portugal}
\email{martins.ferreira@ipleiria.pt}
\author{Ahmet Emin Tatar}
\address{Department of Mathematics and Statistics, KFUPM, Dhahran, KSA}
\email{atatar@kfupm.edu.sa}
\date{}
\begin{document}
\begin{abstract}
In this work we study the notion of Whitehead sequence in the category of crossed modules and actions of crossed modules. As expected, Whitehead sequences in that context are the same as crossed squares. We investigate under which conditions a Whitehead sequence of crossed modules gives rise to an internal groupoid in the category of crossed modules. In other words, we explicitly investigate the so called ``Smith is Huq" condition in the category of crossed modules.
\end{abstract}
\maketitle \UseAllTwocells
\section*{Introduction}
The notion of Whitehead sequence was introduced in \cite{MR3579003} to define internal groupoids, internal crossed modules, and internal actions in a fairly general context. The definition is designed in such a way that it is always possible to define a Whitehead sequence in a category $\oneC$ with respect to an action system of $\oneC$ over $\oneD$. Under that general assumptions, the following natural question arises: when does a Whitehead sequence have an associated groupoid structure? The answer to the question is provided by the following result: 

\begin{theorem}\label{thm_intro}(\cite{MR3579003}[Theorem 6.1])
Let $(I,R,J)$ be an action system of $\oneC$ over $\oneD$. If the pair of functors $(I,J)$ is jointly conservative and $\oneC$ is equivalent to the category of points in $\oneD$ in a way compatible with the system $(I,R,J)$, then the category of Whitehead sequences in $\oneC$ over $\oneD$ is equivalent to the category of internal groupoids in $\oneD$.
\end{theorem}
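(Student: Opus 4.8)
The plan is to establish the equivalence by producing an explicit pair of functors and proving they are mutually quasi-inverse. Write $\mathsf{Pt}(\oneD)$ for the category of points of $\oneD$, and let $E \colon \oneC \xrightarrow{\sim} \mathsf{Pt}(\oneD)$ be the equivalence furnished by the hypothesis, compatible with $(I,R,J)$ in the sense that $J$ computes the base of a point and $I$ computes its kernel. The heart of the matter is the standard observation that, under such an equivalence, the data of a reflexive graph $G_1 \rightrightarrows G_0$ in $\oneD$ is precisely the data of a point over $G_0$, hence of an object of $\oneC$ lying over $G_0 = J(-)$; a Whitehead sequence then supplies exactly the additional morphism needed to turn such a reflexive graph into a multiplicative one.

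First I would build the functor $\mathcal F$ from Whitehead sequences in $\oneC$ over $\oneD$ to internal groupoids in $\oneD$. Given a Whitehead sequence, the equivalence $E$ turns the underlying $\oneC$-object into a point $s \colon G_1 \to G_0$ with section $e$, so that $(G_1 \rightrightarrows G_0, e)$ is a reflexive graph whose base $G_0$ is recovered by $J$ and whose target $t$ is read off from the remaining structural map of the sequence. The composition $m \colon G_1 \times_{G_0} G_1 \to G_1$ is then defined from the action $R$ together with the Whitehead multiplication encoded in the sequence. I would check the groupoid axioms — associativity, the two unit laws, and the existence of inverses — directly from the axioms of the action system $(I,R,J)$ and those of a Whitehead sequence; this is the main computational component, and it is where the compatibility clause is used to guarantee that $s$, $t$, $e$, and $m$ interact as required.

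In the reverse direction I would define $\mathcal G$ from internal groupoids to Whitehead sequences. Given an internal groupoid $G_1 \rightrightarrows G_0$, the split epimorphism $(s,e)$ is a point over $G_0$; transporting it through a quasi-inverse of $E$ yields an object of $\oneC$, and the target $t$ together with the composition $m$ determine the remaining structural morphism and the multiplication datum. The compatibility of $E$ with $(I,R,J)$ is precisely what guarantees that these pieces satisfy the defining axioms of a Whitehead sequence, so that $\mathcal G$ is well defined; functoriality in both directions is routine once the object assignments are fixed.

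The main obstacle will be proving that $\mathcal F$ and $\mathcal G$ are quasi-inverse, i.e. that the canonical comparison morphisms $\mathcal G\mathcal F \Rightarrow \mathrm{id}$ and $\mathcal F\mathcal G \Rightarrow \mathrm{id}$ are isomorphisms. Here joint conservativity of $(I,J)$ does the decisive work: each comparison is a canonical map in $\oneC$ (respectively a map of reflexive graphs in $\oneD$) whose images under $I$ and under $J$ are identities or manifest isomorphisms by construction, so joint conservativity forces the map itself to be an isomorphism. A secondary point to nail down is uniqueness of the multiplicative structure: I would verify that a reflexive graph underlying an object in the image of $E$ carries at most one internal-groupoid composition, which makes the assignment $\mathcal F$ unambiguous and collapses the coherence checks for $\mathcal F\mathcal G \cong \mathrm{id}$ to the level of the underlying graphs, where they become immediate.
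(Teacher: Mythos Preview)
The paper does not contain a proof of this theorem: it is quoted from \cite{MR3579003} as Theorem~6.1 there, and the present paper merely \emph{uses} it (together with the verification that the triple $(I,R,J)$ of (\ref{asf1})--(\ref{asf3}) satisfies the hypotheses) to deduce Theorem~\ref{thm:whitehead_crossed_square}. So there is no ``paper's own proof'' to compare against; any assessment has to be of your proposal on its own merits.

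As a sketch, your outline is in the right spirit but has two concrete issues. First, you have the roles of $I$ and $J$ reversed: in the paper's conventions (see the group example and (\ref{asf1})--(\ref{asf3})), $I$ returns the acting object, which under the equivalence with points corresponds to the \emph{base} of the split epimorphism, while $J$ returns the object acted upon, i.e.\ the kernel; your statement ``$J$ computes the base of a point and $I$ computes its kernel'' is the opposite. Second, the proposal does not engage with the specific action-system axioms (AS-1)--(AS-3): the passage from a Whitehead sequence to a genuine composition $m$ on $G_1\times_{G_0}G_1$ is exactly where the $L$-condition (AS-3) and the organic/cartesian machinery of (AS-2)/(AS-1) are needed, and ``defined from the action $R$ together with the Whitehead multiplication encoded in the sequence'' does not indicate how. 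A Whitehead sequence carries no explicit ``multiplication datum''; it is only the triple $(A,u,v)$ with conditions (\ref{whitehead_sequence}), and the construction of $m$ is the substantive step. If you want to reconstruct the proof, you should consult \cite{MR3579003} directly, where the organic morphism $A\to R(Y)$ and the cartesian lifting are used to manufacture the pullback $G_1\times_{G_0}G_1$ inside $\oneC$ and then the $L$-condition produces the multiplication.
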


To motivate this result, let us analyze an example of a well known case.  In \cite{MR3579003}, it is shown that the category of Whitehead sequences in the category of group actions $\gact$ is equivalent to the category of crossed modules in the category of groups $\grp$. In the same paper, it is explained how to recover this equivalence from Theorem \ref{thm_intro}. Indeed, the functors
\begin{align*}
I: \xymatrix@1{\gact \ar[r] & \grp},& ~ (X,B) \mapsto B;\\
J: \xymatrix@1{\gact \ar[r]& \grp}, &~ (X,B) \mapsto X_{\circ}:=H;\\
R: \xymatrix@1{\grp \ar[r] & \gact}, &~ B \mapsto (\overline{B},B);
\end{align*}
equip $\gact$ with an action system over $\grp$ which satisfies the conditions of Theorem \ref{thm_intro}. Hence, the category of Whitehead sequences in $\gact$ is equivalent to the category of internal groupoids in the category of groups where the latter is know to be equivalent to the category of crossed modules by  Brown-Spencer Theorem \cite{BROWN1976296}.

A more general result is obtained by replacing $\grp$ with any semi-abelian category and by considering the category of its internal actions \cite{Borceux2005}. If the semi-abelian category \cite{MR1887164} satisfies the so called ``Smith is Huq"  condition \cite{MR2899723} (or equivalently, that every star multiplicative graph is multiplicative \cite{MR1990690}) then there is an equivalence between Whitehead sequences and internal crossed modules in the sense of \cite{MR1990690}.

Our purpose is to better understand what happens in the next dimension, that is, when the natural transformations between Whitehead sequences are taken into account. For the moment, to get some intuition in how to proceed, we study the particular case of crossed squares and associated double groupoids in the category of groups. We first define the category of actions of crossed modules $\AXMod$. An action of a crossed module over a crossed module is defined in \cite{MR1087375} and in \cite{Casas2010} in a larger framework. Here, we give a categorical definition: a collection $(X,(M,P,\mu))$ consisting of a crossed module $(M,P,\mu)$ and a 2-functor $X$ from $\underline{\underline{(M,P,\mu)}}$, the 2-category associated to $(M,P,\mu)$, to the 2-category of crossed modules $\Xmod$ describes an action of $(M,P,\mu)$ over the crossed module $X(\circ):=(T,G,\partial)$ where $\circ$ is the unique object of $\underline{\underline{(M,P,\mu)}}$. We define the category of Whitehead sequences, denoted by $\Wseq$, in $\AXMod$ over $\xmod$. We show that $\Wseq$ is equivalent to the category of crossed squares (Theorem \ref{thm:whitehead_crossed_square}). 

Later, in the paper we discuss whether we can obtain the latter equivalence from a bigger framework, that is as a consequence of Theorem \ref{thm_intro}. We show that we can equip $\AXMod$ with an action system over $\xmod$ given by the triple functors   
\begin{align}
 I: \xymatrix@1{\AXMod \ar[r] & \xmod},& ~ (X,(M,P,\mu)) \mapsto (M,P,\mu);\nonumber \\
\label{act_sys_intro}J: \xymatrix@1{\AXMod \ar[r]& \xmod}, &~ (X,(M,P,\mu)) \mapsto X(\circ):=(T,G,\partial);\\
 R: \xymatrix@1{\xmod \ar[r] & \AXMod}, &~ (M,P,\mu) \mapsto (\Con,(M,P,\mu)).\nonumber
\end{align}
where $(\Con,(M,P,\mu))$ is the conjugation action of $(M,P,\mu)$ on itself. We verify that the action system (\ref{act_sys_intro}) satisfies the conditions of Theorem \ref{thm_intro}. Hence, we deduce that $\Wseq$ is equivalent to the category of internal categories in the category of crossed modules, that is the category of 2-cat-groups which is known to be isomorphic to the category of crossed squares by \cite[Proposition 5.2]{LODAY1982179}. 

In this paper, we ignored the 2-cell structure of the crossed modules and of the actions of crossed modules. In a subsequent work, we will include these structures in to the discussion which will shed light on higher Whitehead sequences and eventually lead to the generalization of Theorem \ref{thm_intro} to internal bicategories as defined in \cite{Martins-Ferreira2017}.

This paper is organized as follows: In the first section, we recall the notions of crossed modules and the crossed squares in the category of groups. In the second section, we define the category of actions of crossed modules $\AXMod$. In the third section, we define the category of Whitehead sequences $\Wseq$ in $\AXMod$ and show that $\Wseq$ is equivalent to the category of crossed squares. In the fourth section, we define an action system of $\AXMod$ over $\xmod$. We show that this action system satisfies the conditions of Theorem \ref{thm_intro}. Therefore, Theorem \ref{thm:whitehead_crossed_square} is a consequence of Theorem \ref{thm_intro}.  


\section{Internal Groupoids in the Category of Crossed Modules}
\subsection{Crossed Modules}
We quickly recall crossed modules in groups and their 1- and 2-morphisms. A \emph{crossed module} is a group homomorphism $\partial: T \ra G$ where $G$ acts on $T$ and the action satisfies the conditions $\partial({}^{x}t)=x\partial(t)x^{-1}$ for all $x \in G$ and $t \in T$ and ${}^{\partial(t)}t'=tt't^{-1}$ for all $t,t' \in T$. A \emph{morphism of crossed modules} $(T_1,G_1,\partial_1) \ra (T_2,G_2,\partial_2)$ consists of a pair of group homomorphisms $f=(f_T:T_1\ra T_2,f_G:G_1 \ra G_2)$ so that $\partial_2 \circ f_T= f_G \circ \partial_1$ and $f_T({}^{x}t)={}^{f_G(x)}f_{T}(t)$ for all $x \in G_1$ and $t \in T_1$. We denote the category of crossed modules and their morphisms by $\xmod$.  A \emph{2-morphism of crossed modules} $\alpha:(f_T,f_G) \Ra (f'_T,f'_G)$ between two parallel morphisms of crossed modules from $(f_T,f_G),(f'_T,f'_G):(T_1,G_1,\partial_1) \ra (T_2,G_2,\partial_2)$ consists of a map $\alpha:G_1 \ra T_2$
\begin{equation*}\label{2_morphism_of_crossed_modules}
\begin{tabular}{c}
\xymatrix@!=1cm{T_1 \ar[d]_{\partial_1}\ar@<0.5ex>[r]^{f_T} \ar@<-0.5ex>[r]_{f'_T} & T_2 \ar[d]^{\partial_2}\\
G_1\ar@<0.5ex>[r]^{f_G} \ar@<-0.5ex>[r]_{f'_G} \ar@{-->}[ur]|{\alpha}& G_2}
\end{tabular}
\end{equation*}
satisfying the conditions
\begin{align}
\label{2_morphism_of_crossed_modules_1}f'_G(x)&=\partial_2(\alpha(x))f_G(x) \mathrm{~for~ all~} x \in G_1;\\
\label{2_morphism_of_crossed_modules_2} f_T'(t)&=\alpha(\partial_1(t))f_T(t), \mathrm{~for~ all~} t \in T_1;\\
\label{2_morphism_of_crossed_modules_3} \alpha(xx')&=\alpha(x)\!\ {}^{f_G(x)}\alpha(x')\mathrm{~for~ all~} x,x' \in G_1.
\end{align}

The 1- and 2-morphisms between any two crossed modules $(T_1,G_1,\partial_1)$ and $(T_2,G_2,\partial_2)$ form a category $\oneH((T_1,G_1,\partial_1),(T_2,G_2,\partial_2))$ under the \emph{vertical composition} depicted by the diagram
\begin{equation*}
\begin{tabular}{c}
\xymatrix@!=1.25cm{T_1 \ar[d]_{\partial_1}\ar@<0.75ex>[r]^{f_T} \ar[r]|{f'_T}\ar@<-0.75ex>[r]_{f''_T} & T_2 \ar[d]^{\partial_2} \ar@{}[dr]|{=}& T_1 \ar[d]_{\partial_1}\ar@<0.5ex>[r]^{f_T} \ar@<-0.5ex>[r]_{f''_T} & T_2 \ar[d]^{\partial_2}\\
G_1\ar@<0.75ex>[r]^{f_G} \ar@<-0.75ex>[r]_{f''_G} \ar[r]|{f'_G}\ar@{-->}@<-0.5ex>[ur]^{\alpha} \ar@{-->}@<0.5ex>[ur]_{\beta}& G_2 &G_1\ar@<0.5ex>[r]^{f_G} \ar@<-0.5ex>[r]_{f''_G} \ar@{-->}[ur]|{\beta \circ \alpha}& G_2 }
\end{tabular}
\end{equation*}
and defined by the formula $\beta \circ \alpha:=\beta\alpha$. 

For any three crossed modules $(T_1,G_1,\partial_1)$, $(T_2,G_2,\partial_2)$, and $(T_3,G_3,\partial_3)$, we can define a functor
\begin{equation*}
\xymatrix@1{\oneH((T_1,G_1,\partial_1),(T_2,G_2,\partial_2)) \times \oneH((T_2,G_2,\partial_2),(T_3,G_3,\partial_3)) \ar[r] & \oneH((T_1,G_1,\partial_1),(T_3,G_3,\partial_3))},
\end{equation*}
called the \emph{horizontal composition} depicted by the diagram 

\begin{equation}\label{horizontal_composition_crossed_modules}
\begin{tabular}{c}
\xymatrix@!=1cm{T_1 \ar[d]_{\partial_1}\ar@<0.5ex>[r]^{f_T} \ar@<-0.5ex>[r]_{f'_T} & T_2 \ar[d]_{\partial_2}\ar@<0.5ex>[r]^{g_T} \ar@<-0.5ex>[r]_{g'_T} & T_3 \ar[d]^{\partial_3} \ar@{}[dr]|{=}&T_1 \ar[d]_{\partial_1}\ar@<0.5ex>[r]^{g_T \circ f_T} \ar@<-0.5ex>[r]_{g'_T \circ f'_T} & T_3 \ar[d]^{\partial_3} \\
G_1\ar@<0.5ex>[r]^{f_G} \ar@<-0.5ex>[r]_{f'_G} \ar@{-->}[ur]_{\alpha}& G_2\ar@<0.5ex>[r]^{g_G} \ar@<-0.5ex>[r]_{g'_G} \ar@{-->}[ur]_{\beta}& G_3 & G_1\ar@<0.5ex>[r]^{g_G \circ f_G} \ar@<-0.5ex>[r]_{g'_G \circ f'_G} \ar@{-->}[ur]|{\beta * \alpha}& G_3}
\end{tabular}
\end{equation}
and defined by the formula 
\begin{equation}\label{horizontal_composition}
\beta*\alpha:=(\beta \circ f'_G)(g_T \circ \alpha).
\end{equation}
\begin{lemma}\label{2nd_def_of_horizontal_composition}
In the setting of the diagram (\ref{horizontal_composition_crossed_modules}), the horizontal composition can be also defined by the relation $\beta*\alpha:=(g'_T \circ \alpha)(\beta \circ f_G)$.
\end{lemma}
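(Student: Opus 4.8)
The plan is to verify that the two formulas agree as maps $G_1 \ra T_3$ by a pointwise computation. Fixing $x \in G_1$, the defining formula (\ref{horizontal_composition}) gives $(\beta * \alpha)(x) = \beta(f'_G(x))\, g_T(\alpha(x))$, whereas the proposed alternative evaluates to $g'_T(\alpha(x))\, \beta(f_G(x))$. So the task reduces to establishing the identity
\begin{equation*}
\beta(f'_G(x))\, g_T(\alpha(x)) = g'_T(\alpha(x))\, \beta(f_G(x))
\end{equation*}
in the group $T_3$ for every $x \in G_1$.

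First I would rewrite the left-hand side using the source relation (\ref{2_morphism_of_crossed_modules_1}) for $\alpha$, namely $f'_G(x) = \partial_2(\alpha(x))\, f_G(x)$, and then expand $\beta(\partial_2(\alpha(x))\, f_G(x))$ by the cocycle condition (\ref{2_morphism_of_crossed_modules_3}) applied to $\beta$. This splits the term into $\beta(\partial_2(\alpha(x)))$ multiplied by a twisted copy ${}^{g_G(\partial_2(\alpha(x)))}\beta(f_G(x))$ of $\beta(f_G(x))$. Simultaneously I would use the target relation (\ref{2_morphism_of_crossed_modules_2}) for $\beta$, i.e. $g'_T(t) = \beta(\partial_2(t))\, g_T(t)$, in the form $\beta(\partial_2(\alpha(x))) = g'_T(\alpha(x))\, g_T(\alpha(x))^{-1}$.

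The hard part -- really the only non-formal step -- is to recognize that the $G_2$-action appearing in the twist can be turned into an inner automorphism of $T_3$. For this I would use that $g=(g_T,g_G)$ is a morphism of crossed modules, so that $g_G \circ \partial_2 = \partial_3 \circ g_T$, giving $g_G(\partial_2(\alpha(x))) = \partial_3(g_T(\alpha(x)))$; the crossed module axiom ${}^{\partial_3(t)}t' = t\, t'\, t^{-1}$ in $(T_3,G_3,\partial_3)$ then converts the twist into conjugation by $g_T(\alpha(x))$.

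Substituting both expressions, the left-hand side becomes
\begin{equation*}
g'_T(\alpha(x))\, g_T(\alpha(x))^{-1} \cdot g_T(\alpha(x))\, \beta(f_G(x))\, g_T(\alpha(x))^{-1} \cdot g_T(\alpha(x)),
\end{equation*}
and the two cancelling pairs $g_T(\alpha(x))^{-1} g_T(\alpha(x))$ collapse to leave exactly $g'_T(\alpha(x))\, \beta(f_G(x))$, the right-hand side. Since $x$ was arbitrary, this proves the equality of maps and hence the second description of the horizontal composition.
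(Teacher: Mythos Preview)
Your proof is correct and follows essentially the same approach as the paper's: both start from $\beta(f'_G(x))\,g_T(\alpha(x))$, substitute $f'_G(x)=\partial_2(\alpha(x))f_G(x)$ via (\ref{2_morphism_of_crossed_modules_1}), expand $\beta$ using the cocycle condition (\ref{2_morphism_of_crossed_modules_3}), convert the twist to conjugation via $g_G\partial_2=\partial_3 g_T$ and the Peiffer identity, and recombine using (\ref{2_morphism_of_crossed_modules_2}) for $\beta$. The only cosmetic difference is that you isolate $\beta(\partial_2(\alpha(x)))=g'_T(\alpha(x))\,g_T(\alpha(x))^{-1}$ before cancelling, whereas the paper applies (\ref{2_morphism_of_crossed_modules_2}) as the final step.
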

\begin{proof}
As 2-morphisms $\alpha: (f_T,f_G) \ra (f'_T,f'_G)$ and $\beta:(g_T,g_G) \ra (g'_T,g'_G)$ respectively satisfy the relations $\partial_2(\alpha(x))f_G(x)=f'_G(x)$ and $\beta(\partial_2(t))g_T(t)=g'_T(t)$. Then, for any $x \in G_1$
\begin{align*}
(\beta\circ f'_G(x))(g_T \circ \alpha(x))&=\beta\left(\partial_2(\alpha(x))f_G(x)\right)(g_T \circ \alpha(x))\\
&=\beta(\partial_2(\alpha(x))) \!\ {}^{g_G(\partial_2(\alpha(x))}(\beta \circ f_G(x))(g_T \circ \alpha(x))\\
&=\beta(\partial_2(\alpha(x))) \!\ {}^{\partial_3(g_T(\alpha(x)))}(\beta \circ f_G(x))(g_T \circ \alpha(x))\\
&=\beta(\partial_2(\alpha(x))) \!\ g_T(\alpha(x))(\beta \circ f_G(x))\\
&=(g'_T \circ \alpha(x))(\beta \circ f_G(x))
\end{align*}
\end{proof}

Crossed modules and their 1- and 2- morphisms with the vertical and horizontal compositions defined above form the \emph{2-category of crossed modules} denoted by $\Xmod$.

A crossed module $(T,G,\partial)$ can be associated a groupoid $\underline{(T,G,\partial)}$ whose objects are the elements of $G$, and whose morphisms are the elements of $T \times G$ where a pair $(t,x) \in T \times G$ represents a morphism from $x$ to $\partial(t)x$. The composition is given by the group operation in $T$, that is if $(t_2,x_2) \circ (t_1,x_1)=(t_2t_1,x_1)$ given that $x_2=\partial(t_1)x_1$. Moreover, there exists a bifunctor
\begin{equation}\label{group_like_structure_functor}
\otimes:\xymatrix@1{\underline{(T,G,\partial)} \times \underline{(T,G,\partial)} \ar[r] & \underline{(T,G,\partial)}},
\end{equation}
given by the group operation of $G$ on objects and by the semidirect product of the group $T \rtimes G$ on the morphisms. Verifying that this functor preserves composition is a simple exercise which requires using the axioms of the crossed module $(T,G,\partial)$. Hence, we can see $\underline{(T,G,\partial)}$ as a 2-category $\underline{\underline{(T,G,\partial)}}$ with one object whose hom-category is $\underline{(T,G,\partial)}$ and whose composition functor is (\ref{group_like_structure_functor}). From another perspective, we can consider the existence of (\ref{group_like_structure_functor}) as a strict group-like structure on $\underline{(T,G,\partial)}$ in the sense of Breen \cite{MR1191733}.

This construction is functorial. We can associate to a crossed module morphism $(f_T,f_G):(T_1,G_1,\partial_1) \ra (T_2,G_2,\partial_2)$ a functor $F:\underline{(T_1,G_1,\partial_1)} \ra \underline{(T_2,G_2,\partial_2)}$ given by $F(x):=f_G(x)$ for every $x \in G$ object in $\underline{(T_1,G_1,\partial_1)}$ and by $F(t,x):=(f_T(t),f_G(x))$ for every $(t,x) \in T \rtimes G$ morphism in $\underline{(T_1,G_1,\partial_1)}$. We remark that for every $x_1,x_2 \in G$ objects in $\underline{(T_1,G_1,\partial_1)}$, $F(x_2x_1)=F(x_2)F(x_1)$ and for every $(t_1,x_1),(t_2,x_2) \in T \rtimes G$ morphisms in $\underline{(T_1,G_1,\partial_1)}$, $F((t_2,x_2)(t_1,x_1))=F(t_2,x_2)F(t_1,x_1)$. That is, $F$ respects the group-like structures on the categories. We call such functors morphisms of group-like categories or simply additive functors. As $F$ preserves the group-like structure not just up to an isomorphism but on the nose, we call it strict additive functor. 

Reciprocally, starting with a strict additive functor $F:\underline{(T_1,G_1,\partial_1)} \ra \underline{(T_2,G_2,\partial_2)}$, we construct a morphism of crossed modules $(f_T,f_G):(T_1,G_1,\partial_1) \ra (T_2,G_2,\partial_2)$ as follows: We define $f_G:G_1 \ra G_2, ~f_G(x)=F(x)$. Before defining $f_T:T_1\ra T_2$, observe that as $F$ is strict it sends any morphism that emanates from 1 to such a morphism, that is, for any $t \in T_1$, $F(t,1)$ is of the form $(t',1)$ where $t' \in T_2$. Hence, we define $f_T(t)=t'$. Due to the strictness of $F$, $f_G$ and $f_T$ are group homomorphisms. The definitions of $f_G$ and $f_T$ also satisfy the condition $f_G \circ \partial_1=\partial_2 \circ f_T$. In fact, $F(t,1)$ which is by definition the morphism $1 \ra \partial_2(f_T(t))$ is also the morphism $1\ra F(\partial_1(t))$. Hence,  $f_G \circ \partial_1(t)=\partial_2 \circ f_T(t)$, for every $t \in T$. Finally, $f_G$ and $f_T$ respect the actions in the sense that for every $x \in G_1$ and $t \in T_1$, we have $f_T({}^{x}t)={}^{f_G(x)}f_T(t)$:
\begin{align*}
(f_T({}^{x}t),1)&=F({}^x{t},1)=F\left((1,x)(t,1)(1,x)^{-1}\right)=F(1,x)F(t,1)F(1,x)^{-1}\\
&=(1,f_G(x))(f_T(t),1)(1,f_G(x))^{-1}=({}^{f_G(x)}f_T(t),1).
\end{align*}

We shall note that the strictness of the additive functor $F:\underline{(T_1,G_1,\partial_1)} \ra \underline{(T_2,G_2,\partial_2)}$ is the key ingredient in the construction of the morphism of crossed squares $(f_T,f_G):(T_1,G_1,\partial_1) \ra (T_2,G_2,\partial_2)$. If $F$ is not strict, then the associated crossed module morphism $\underline{(T_1,G_1,\partial_1)} \ra \underline{(T_2,G_2,\partial_2)}$ is not strict either, that is it is not a commutative square but rather a butterfly \cite{noohi-2005}.

\subsection{Crossed Squares as Double Groupoids}
We remind from \cite{LODAY1982179} the definition of crossed squares and some facts about them. A \emph{crossed square} is a commutative diagram of groups and group homomorphisms
\begin{equation}\label{crossed_square_prep}
\begin{tabular}{c}
\xymatrix@!=1cm{T \ar[r]^{f} \ar[d]_{\partial} & M \ar[d]^{\mu}\\G \ar[r]_{g}&P}
\end{tabular}
\end{equation}
equipped with actions of $P$ on $T$, on $M$, and on $G$, a function $\phi:M \times G \ra T$ satisfying the axioms
\begin{enumerate}[(CS-1)]
\item $f$ and $\partial$ are $P$-equivariant;
\item $\mu$, $g$, and $\mu \circ f=g \circ \partial$ are crossed modules;
\item $f \circ \phi (m,x)=m\!\ {}^{g(x)}m^{-1}$ for every $m \in M$, $x \in G$;
\item $\partial \circ \phi(m,x)={}^{\mu(m)}x\!\ x^{-1}$ for every $m \in M$, $x \in G$;
\item $\phi(f(t),x)=t\!\ {}^{g(x)}t^{-1}$ for every $t \in T$, $x \in G$;
\item $\phi(m,\partial(t))={}^{\mu(m)}t\!\ t^{-1}$ for every $t \in T$, $m \in M$;
\item $\phi(m_0m_1,x)={}^{\mu(m_0)}\phi(m_1,x)\phi(m_0,x)$ for every $m_0,m_1 \in M$, $x \in G$;
\item $\phi(m,x_0x_1)=\phi(m,x_0)\!\ {}^{g(x_0)}\phi(m,x_1)$ for every $m \in M$, $x_0,x_1 \in G$;
\item ${}^{p}\phi(m,x)=\phi({}^pm,{}^px)$, for every $m \in M$, $x \in G$, and $p \in P$.
\end{enumerate}
It follows from the above axioms that the homomorphisms $\partial$ and $f$ are crossed modules, as well.  A \emph{morphism of crossed squares} is given by the horizontal morphisms in the commutative diagram
\begin{equation}\label{morphism_crossed_squares}
\begin{tabular}{c}
\xymatrix@!=0.5cm{&T_1\ar[rr]\ar[dr]^{f_1} \ar[dl]_{\partial_1} &&T_2\ar[dr]^{f_2}&&&&T_1\ar[rr] \ar[dl]_{\partial_1} &&T_2\ar[dr]^{f_2}\ar[dl]_{\partial_2}&\\
G_1 \ar[dr]_{g_1}&&M_1 \ar[rr] \ar[dl]^{\mu_1}&&M_2\ar[dl]^{\mu_2}&=&G_1 \ar[dr]_{g_1} \ar[rr]&& G_2 \ar[dr]_{g_2} &&M_2 \ar[dl]^{\mu_2}\\
&P_1 \ar[rr] && P_2 &&&& P_1 \ar[rr] &&P_2&}
\end{tabular}
\end{equation} 
that are compatible with the actions and the maps $\phi_1$ and $\phi_2$. $\Xsq$ denotes the category of crossed squares .

\section{Category of Actions of Crossed Modules}
In this section, we define the category of actions of crossed modules denoted by $\AXMod$. Automorphisms and actions of crossed modules are known since \cite{MR1087375}. Let us recollect some of these definitions and facts related to them. 

A \emph{derivation} is a map $\chi: G \ra T$ which satisfies $\chi(xy)=\chi(x)\!\ {}^{x}\chi(y)$ for all $x,y \in G$. Any derivation $\chi$ defines a pair of endomorphisms $\sigma:G \ra G, \,\ x \mapsto \partial(\chi(x))x$ and $\theta:T \ra T, \,\ t \mapsto \chi(\partial(t))t$. Two derivations $\chi_1$ and $\chi_2$ can be multiplied and the product is the derivation $\chi$ denoted by $\chi_1 * \chi_2$ and defined by the formula
\begin{equation}\label{der_multiplication}
\chi(x)=\chi_1(x)*\chi_2(x)=\chi_1(\sigma_2(x))\chi_2(x) ~ (=\theta_1(\chi_2(x))\chi_1(x)),
\end{equation}
where $(\theta_1,\sigma_1)$ and $(\theta_2,\sigma_2)$ are pairs of endomorphisms which correspond respectively to $\chi_1$ and $\chi_2$. Under this multiplication, derivations form a semigroup. For future use, we shall record the following relations that $\theta$, $\sigma$, and $\chi$ satisfy:
\begin{align}
\label{der_1} \sigma(\partial(t))&=\partial(\theta(t));\\
\label{der_2} \theta(\chi(x))&= \chi(\sigma(x)); \\
\label{der_3} \theta({}^xt)&={}^{\sigma(x)}\theta(t).
\end{align}
A \emph{regular derivation} is a derivation which is a unit in the semigroup of derivations. The group of regular derivations is denoted by $D(G,T)$. In case the derivation $\chi$ is regular, the endomorphisms $\theta$ and $\sigma$ as defined above become automorphisms (see \cite{MR535788}), in fact, an automorphism of the crossed module $(T,G,\partial)$ due to (\ref{der_3}). Hence, there exists a group homomorphism $\Delta: D(G,T) \ra \Aut(T,G,\partial)$ where $\Aut(T,G,\partial)$ is the group of automorphisms of $(T,G,\partial)$. $\Delta$ assigns to any regular derivation $\chi$, the pair $(\theta,\sigma)$. The \emph{actor crossed module} $\AAut(T,G,\partial)$ of $(T,G,\partial)$ is the group homomorphism $\Delta$ with the action of $\Aut(T,G,\partial)$ on $D(G,T)$ given by ${}^{(f_T,f_G)}\chi:=f_T \circ \chi \circ f_G^{-1}$. 

Let $(M,P,\mu)$ and $(T,G,\partial)$ be two crossed modules. In  \cite{MR1087375}, it is defined that $(M,P,\mu)$ acts on $(T,G,\partial)$ if there exists a morphism of crossed modules from $(M,P,\mu)$ to the actor crossed module $\AAut(T,G,\partial)$
\begin{equation}\label{action_of_crossed_modules}
\begin{tabular}{c}
\xymatrix@!=1cm{M \ar[d]_{\mu} \ar[r]^{\varepsilon} & **[r]D(G,T)\ar[d]^{\Delta}\\P \ar[r]_{\rho} & **[r]\Aut(T,G,\partial)}
\end{tabular}
\end{equation}

To be able to define the morphisms of actions of crossed modules, hence their category,  we shall study the above definition of action of crossed modules from a categorical point of view. First of all, we notice that a regular derivation $\chi$ is nothing but a 2-morphism of crossed modules from the identity morphism $(\id_{T},\id_{G}):(T,G,\partial) \ra (T,G,\partial)$ to the morphism $(\theta,\sigma):(T,G,\partial) \ra (T,G,\partial)$.
\begin{equation*}\label{diag:regular_derivation}
\begin{tabular}{c}
\xymatrix@!=1cm{T \ar[d]_{\partial}\ar@<0.5ex>[r]^{\id_T} \ar@<-0.5ex>[r]_{\theta} & T \ar[d]^{\partial}\\
G\ar@<0.5ex>[r]^{\id_G} \ar@<-0.5ex>[r]_{\sigma} \ar@{-->}[ur]_{\chi}& G}
\end{tabular}
\end{equation*}
and the group homomorphism $\Delta$ simply maps $\chi$ to its codomain. Moreover, the multiplication of regular derivations $\chi_1$ and $\chi_2$ given by the relation (\ref{der_multiplication}) correspond to their horizontal composition described by (\ref{horizontal_composition}).
\begin{equation*}\label{diag:horizontal_composition}
\begin{tabular}{c}
\xymatrix@!=1cm{T \ar[d]_{\partial}\ar@<0.5ex>[r]^{\id_T} \ar@<-0.5ex>[r]_{\theta_2} & T \ar[d]^{\partial} \ar@<0.5ex>[r]^{\id_T} \ar@<-0.5ex>[r]_{\theta_1} & T \ar[d]^{\partial} \ar@{}[dr]|{=} & T \ar[d]_{\partial}\ar@<0.5ex>[r]^{\id_T} \ar@<-0.5ex>[r]_{\theta_2\circ \theta_1} & T \ar[d]^{\partial} \\
G\ar@<0.5ex>[r]^{\id_G} \ar@<-0.5ex>[r]_{\sigma_2} \ar@{-->}[ur]_{\chi_2}& G\ar@<0.5ex>[r]^{\id_G} \ar@<-0.5ex>[r]_{\sigma_1} \ar@{-->}[ur]_{\chi_1} & G & G\ar@<0.5ex>[r]^{\id_G} \ar@<-0.5ex>[r]_{\sigma_1 \circ \sigma_2} \ar@{-->}[ur]|{\chi_1 * \chi_2}& G}
\end{tabular}
\end{equation*}
After these observations, we are ready to define actions of crossed modules in a more categorical way: 
\begin{proposition}\label{prop:action_as_functor}
If $(M,P,\mu)$ acts on $(T,G,\partial)$ then there exists a strict 2-functor 
\begin{equation}\label{action_as_2_functor}
X:\xymatrix@1{\underline{\underline{(M,P,\mu)}} \ar[r] & \Xmod,}
\end{equation}
so that $X$ assigns to the only object of $\underline{\underline{(M,P,\mu)}}$ the crossed module $(T,G,\partial)$. Moreover, if there exists a strict 2-functor (\ref{action_as_2_functor}), then $(M,P,\mu)$ acts on the crossed module $(T,G,\partial)$ assigned to the only object of $\underline{\underline{(M,P,\mu)}}$.
\end{proposition}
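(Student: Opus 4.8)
The plan is to set up an explicit dictionary between the data of an action, namely the crossed-module morphism $(\varepsilon,\rho)\colon(M,P,\mu)\ra\AAut(T,G,\partial)$ of diagram (\ref{action_of_crossed_modules}), and the data of a strict $2$-functor $X$ out of the one-object $2$-category $\underline{\underline{(M,P,\mu)}}$, and then to match the four defining conditions of a crossed-module morphism against the four axioms of strict $2$-functoriality. Conceptually this is the ``delooped'' counterpart of the correspondence between strict additive functors and morphisms of crossed modules established earlier in this section, applied with target $\AAut(T,G,\partial)$: a strict $2$-functor out of $\underline{\underline{(M,P,\mu)}}$ is a strict additive functor on the hom-category, and the invertible endomorphisms of $(T,G,\partial)$ in $\Xmod$ (automorphisms together with regular derivations) form exactly the group-like category $\underline{\AAut(T,G,\partial)}$. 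I would nonetheless carry it out concretely so as to land directly in $\Xmod$, which also explains why \emph{strictness} is the right hypothesis: it is what turns $\rho$ and $\varepsilon$ into honest homomorphisms rather than coherence data.

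For the first implication I would set $X(\circ):=(T,G,\partial)$, send each object $p\in P$ of the hom-category (i.e.\ each $1$-morphism) to the automorphism $X(p):=\rho(p)$, written $(\theta_p,\sigma_p)$ and viewed as an endomorphism $1$-cell, and send each morphism $(m,p)\colon p\ra\mu(m)p$ (i.e.\ each $2$-cell) to the whiskered derivation $X(m,p):=\varepsilon(m)*\id_{\rho(p)}$, which as a map $G\ra T$ is $x\mapsto\varepsilon(m)(\sigma_p(x))$. The first thing to verify is that this is a legitimate $2$-cell $\rho(p)\Ra\rho(\mu(m)p)$ in $\Xmod$: conditions (\ref{2_morphism_of_crossed_modules_1})--(\ref{2_morphism_of_crossed_modules_3}) follow from $\varepsilon(m)$ being a derivation, the identities (\ref{der_1})--(\ref{der_3}), and $\Delta(\varepsilon(m))=\rho(\mu(m))$, the last being precisely the commutativity of (\ref{action_of_crossed_modules}). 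Functoriality then splits cleanly: preservation of identity $1$-cells is $\rho(1)=\id$, preservation of $1$-cell composition is $\rho(p_1p_2)=\rho(p_1)\rho(p_2)$; preservation of vertical composition of $2$-cells is the homomorphism property of $\varepsilon$ (the vertical rule $\beta\circ\alpha=\beta\alpha$ of $\Xmod$ unwinds, via (\ref{der_multiplication}), to the product in $D(G,T)$); and preservation of horizontal composition of $2$-cells is the $P$-equivariance $\varepsilon({}^{p}m)={}^{\rho(p)}\varepsilon(m)$.

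For the converse I would read the same dictionary backwards. Given a strict $X$ with $X(\circ)=(T,G,\partial)$, strictness forces $X(1)=\id$ and $X(p)\circ X(p^{-1})=X(1)=\id$, so every $X(p)$ is an automorphism; set $\rho(p):=X(p)$, a homomorphism $P\ra\Aut(T,G,\partial)$ by preservation of $1$-cell composition. Likewise each $(m,p)$ is invertible in the groupoid hom-category, so its image is a regular derivation; set $\varepsilon(m):=X(m,1)$, a $2$-cell $\id\Ra\rho(\mu(m))$, hence an element of $D(G,T)$ whose associated automorphism is $\rho(\mu(m))$. This identity is exactly $\Delta\circ\varepsilon=\rho\circ\mu$; the homomorphism property of $\varepsilon$ comes from preservation of vertical composition and the equivariance again from horizontal composition, so $(\varepsilon,\rho)$ is a morphism into $\AAut(T,G,\partial)$, i.e.\ an action.

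I expect the horizontal-composition step to be the main obstacle in both directions. It is the one place where the semidirect-product tensor $M\rtimes P$ on the morphisms of $\underline{(M,P,\mu)}$ must be reconciled with the horizontal formula (\ref{horizontal_composition}) of $\Xmod$; doing so means expanding $X(m_1\,{}^{p_1}m_2,\,p_1p_2)$ and the composite $X(m_1,p_1)*X(m_2,p_2)$ and checking they agree, which is exactly where the action ${}^{(f_T,f_G)}\chi=f_T\circ\chi\circ f_G^{-1}$ and the identities (\ref{der_1})--(\ref{der_3}) are all used simultaneously. Everything else reduces to short substitutions.
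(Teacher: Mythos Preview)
Your proposal is correct and follows essentially the same route as the paper: identical definitions $X(p)=\rho(p)$, $X(m,p)=\varepsilon(m)\circ\sigma_p$ in the forward direction and $\rho(p)=X(p)$, $\varepsilon(m)=X(m,1)$ in the converse, with the horizontal-composition check (matching the semidirect product $(m'\,{}^{p'}m,p'p)$ against the $*$-composition in $\Xmod$) singled out as the substantive computation in both directions. The only cosmetic difference is that the paper carries out just that one horizontal computation explicitly (your anticipated ``main obstacle''), whereas you propose a clean axiom-by-axiom bookkeeping; note that your pairing ``horizontal $\leftrightarrow$ equivariance'' is a slight idealization, since the horizontal expansion in fact consumes the homomorphism property of $\varepsilon$ and the square $\Delta\varepsilon=\rho\mu$ as well---exactly as you acknowledge in your final paragraph.
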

\begin{proof}
Assume the action of $(M,P,\mu)$ on $(T,G,\partial)$ is given by the diagram (\ref{action_of_crossed_modules}). We define (\ref{action_as_2_functor}) as follows:
\begin{itemize}
\item The only object of $\underline{\underline{(M,P,\mu)}}$ is assigned to $(T,G,\partial)$.
\item The functor between the hom-categories
\begin{equation}\label{hom_categories_of_actions}
X_{\circ}:\xymatrix@1{\underline{(M,P,\mu)} \ar[r] & \Endo(T,G,\partial)},
\end{equation}
assigns to every 
\begin{itemize}
\item object $p \in P$ in $\underline{(M,P,\mu)}$, the automorphism $X_{\circ}(p):=\rho(p)=(\theta_p,\sigma_p)$ of the crossed module $(T,G,\partial)$,
\item morphism $(m,p) \in M \rtimes P$ in $\underline{(M,P,\mu)}$, the morphism of automorphisms $X_{\circ}(m,p):(\theta_p,\sigma_p) \Ra (\theta_{\mu(m)p},\sigma_{\mu(m)p})$ given by the map $\varepsilon(m) \circ \sigma_p: G \ra T$.
\end{itemize}
$X_{\circ}$ sends the identity morphism $(1,p)$ to $\varepsilon(1) \circ \sigma_p$. As $\varepsilon$ is a group homomorphism, for any $x \in G$, $\varepsilon(1)(\sigma_p(x))=1$, that is $\varepsilon(1)\circ \sigma_p$ is the identity 2-morphism $(\theta_p,\sigma_p) \Ra (\theta_{p},\sigma_{p})$. Let $(m,p)$ and $(m',p')$ be two morphisms of $\underline{(M,P,\mu)}$. Their composition is given by the semidirect product $(m'\!\ {}^{p'}m,p'p)$. To prove that $X_{\circ}$ preserves the composition, we shall prove the relation $X_{\circ}(m'\!\ {}^{p'}m,p'p)=X_{\circ}(m',p')*X_{\circ}(m,p)$. In fact;
\begin{align*}
X_{\circ}(m'\!\ {}^{p'}m,p'p)&=\varepsilon(m'\!\ {}^{p'}m)\circ \sigma_{p'}\circ\sigma_{p}\\
&=\left(\varepsilon(m')*\varepsilon({}^{p'}m)\right)\circ \sigma_{p'} \circ \sigma_{p}\\ 
&=\left(\left(\varepsilon(m') \circ \sigma_{\mu({}^{p'}m)}\right)\varepsilon({}^{p'}m)\right)\circ \sigma_{p'} \circ \sigma_{p}\\ 
&=\left(\varepsilon(m') \circ \sigma_{\mu({}^{p'}m)}\circ \sigma_{p'} \circ \sigma_{p}\right)\left(\varepsilon({}^{p'}m)\circ \sigma_{p'} \circ \sigma_{p}\right)\\ 
&=\left(\varepsilon(m') \circ \sigma_{p'}\circ \sigma_{\mu(m)p}\right)\left({}^{(\theta_{p'},\sigma_{p'})}\varepsilon(m)\circ \sigma_{p'} \circ \sigma_{p}\right)\\ 
&=\left(\varepsilon(m') \circ \sigma_{p'}\circ \sigma_{\mu(m)p}\right)\left(\theta_{p'} \circ \varepsilon(m)\circ \sigma_{p}\right)\\
&=X_{\circ}(m',p') * X_{\circ}(m,p)
\end{align*}
\end{itemize}
We note that $X$ is strict since on objects $X_{\circ}$ is defined by the group homomorphism $\rho$.

Reciprocally, given a strict 2-functor (\ref{action_as_2_functor}) that assigns $(T,G,\partial)$ to the only object of $\underline{\underline{(M,P,\mu)}}$, we define $\varepsilon: M \ra D(G,T), ~\varepsilon(m):=X_{\circ}(m,1)$ and $\rho:P \ra \Aut(T,G,\partial), ~ \rho(p):=X_{\circ}(p)$. The fact that $\varepsilon$ and $\rho$ are group homomorphisms follows from $X_{\circ}$ being a strict functor. $(m,1)$ is the morphism $1 \ra \mu(m)$ in $\underline{(M,P,\mu)}$ mapped to $(\id_T,\id_G) \Ra X_{\circ}(\mu(m))$ by $X_{\circ}$. Then from definitions, $\Delta \circ \varepsilon(m)$ is the codomain of $(\id_T,\id_G) \ra X_{\circ}(\mu(m))$, that is $\Delta \circ \varepsilon(m)=X_{\circ}(\mu(m))$ which is simply $\rho(\mu(m))$ by its definition. We show that $\varepsilon({}^{p}m)={}^{\rho(p)}\varepsilon(m)$ for any $(m,p) \in M \rtimes P$.
\begin{align*}
\varepsilon({}^{p}m)&=X_{\circ}({}^pm,1)\\
&=X_{\circ}\left((1,p)(m,1)(1,p^{-1})\right)\\
&=X_{\circ}(1,p) * X_{\circ}(m,1) * X_{\circ}(1,p^{-1})\\
&=\theta_p \circ X_{\circ}(m,1) \circ \sigma_p^{-1}\\
&={}^{\rho(p)}\varepsilon(m)
\end{align*} 
where $X_{\circ}(1,p)$ is the identity morphism of the automorphism $(\theta_p,\sigma_p)$.
\end{proof}

In the rest of the paper, we work with the categorical definition of actions of crossed modules and we represent an action by $(X,(M,P,\mu))$. 

\begin{example}\label{ex:conjugation}
As an example and for the future use, we remind the conjugation action of the crossed module $(M,P,\mu)$ on itself. $(M,P,\mu)$ acts by conjugation on itself if there exists a crossed module homomorphism
\begin{equation*}\label{conjugation_action_of_crossed_modules}
\begin{tabular}{c}
\xymatrix@!=1cm{M \ar[d]_{\mu} \ar[r]^{\eta} & **[r]D(P,M)\ar[d]^{\Delta}\\P \ar[r]_{\zeta} & **[r]\Aut(M,P,\mu)}
\end{tabular}
\end{equation*}
where $\eta(m)(p)=m\!\ {}^pm^{-1}$ and $\zeta(p)=(\psi_p,\varphi_p)$ with $\psi_p(m)={}^pm$ and $\varphi_p(p')=pp'p^{-1}$. We represent this action by the collection $(\Con,(M,P,\mu))$ where the 2-functor $\Con$ sends the unique object $\circ$ of $\underline{\underline(M,P,\mu)}$ to $(M,P,\mu)$ and the hom-functor $\Con_{\circ}: \underline{(M,P\mu)}\ra \Endo(M,P,\mu)$ is given for any $p \in P$ by $\Con_{\circ}(p)=\zeta(p)$ and for any $(m,p) \in M \rtimes P$ by $\Con_{\circ}(m,p)=\eta(m)\circ \varphi_p$.
\end{example}

Let $(X_1,(M_1,P_1,\mu_1))$ and $(X_2,(M_2,P_2,\mu_2))$ be two actions of crossed modules. A \emph{morphism of actions} 
\begin{equation}\label{1-morphism_of_actions_1}
(N,(\alpha_M,\alpha_P)):\xymatrix@1{(X_1,(M_1,P_1,\mu_1)) \ar[r] &(X_2,(M_2,P_2,\mu_2))},
\end{equation}
is given by 
\begin{itemize}
\item a morphism of crossed modules $(\alpha_M,\alpha_P): (M_1,P_1,\mu_1) \ra (M_2,P_2,\mu_2)$
\item a strict transformation $N: X_1 \Ra X_2 \circ A$, where $A$ is the strict 2-functor associated to $(\alpha_M,\alpha_P)$.
\end{itemize}
If we unfold this definition, the morphism (\ref{1-morphism_of_actions_1}) is given by the two morphisms of crossed modules
\[(\alpha_M,\alpha_P): (M_1,P_1,\mu_1) \ra (M_2,P_2,\mu_2),\]
\[N_{\circ}:=(N_{T},N_{G}):(T_1, G_1,\partial_1) \ra (T_2,G_2,\partial_2),\] 
so that the diagram
\begin{equation}\label{1-morphism_of_actions_2}
\begin{tabular}{c}
\xymatrix{\underline{(M_1,P_1,\mu_1)} \ar[r]^{X_{1\circ}} \ar[d]_{X_{2\circ} \circ A} & \AAut(T_1,G_1,\partial_1) \ar[d]^{(N_{T},N_{G}) \circ -} \\
\AAut(T_2,G_2,\partial_2) \ar[r]_{-\circ(N_{T},N_{G})}& **[r]\Hom((T_1,G_1,\partial_1),(T_2,G_2,\partial_2))}
\end{tabular}
\end{equation}
commutes. The trace of the object $p \in P_1$ along the upper-left and the lower-right sides of the diagram (\ref{1-morphism_of_actions_2}) provides respectively two 1-morphisms of crossed modules $(N_T \circ \theta^1_p, N_G \circ \sigma^1_p)$ and $(\theta^2_{\alpha_P(p)} \circ N_T,\sigma^2_{\alpha_P(p)} \circ N_G)$ where $X_{1\circ}(p)=(\theta^1_p,\sigma^1_p)$ and $X_{2\circ}(\alpha_P(p))=(\theta^2_{\alpha_P(p)}, \sigma^2_{\alpha_P(p)})$. They are equal due to the commutativity of (\ref{1-morphism_of_actions_2}). Hence, the relations:
\begin{align}
\label{1-morphism_of_actions_4} \theta^2_{\alpha_P(p)} \circ N_T&=N_T \circ \theta^1_p,\\
\label{1-morphism_of_actions_4bis} \sigma^2_{\alpha_P(p)} \circ N_G &=N_G \circ \sigma^1_p.
\end{align}
Similarly, by tracing the morphism $(m,p) \in M_1 \rtimes P_1$ along the upper-left and the lower-right sides of the diagram (\ref{1-morphism_of_actions_2}), we obtain the relation
\begin{equation}\label{1-morphism_of_actions_5}
N_T \circ X_{1\circ}(m,1) \circ \sigma^1_p=X_{2\circ}(\alpha_M(m),1) \circ \sigma^2_{\alpha_P(p)} \circ N_G.
\end{equation}

The actions of crossed modules on crossed modules and their morphisms form a category denoted by $\AXMod$.

\begin{remark}\label{rem:semi_direct_cross_product}
Given an action of $(M,P,\mu)$ on $(T,G,\partial)$ by a  crossed module morphism of the form (\ref{action_of_crossed_modules}), we can form the semidirect products $T \rtimes M$ and $G \rtimes P$. If $\rho=(\theta, \sigma)$, then $P$ acts on $G$ via $\sigma$ whereas $M$ acts on $T$ via $\theta \circ \mu$. The map $(\partial,\mu):T \rtimes M \ra G \rtimes P$ is a group homomorphism, as well. Moreover, there exists an action of $G \rtimes P$ over $T \rtimes M$ defined by  
\begin{equation}\label{action_of_cross_product}
{}^{(x,p)}(t,m)=\left({}^{x}({}^pt)(\varepsilon({}^pm)x^{-1}),{}^pm\right),
\end{equation}
where ${}^pt=\theta_p \circ \mu(t)$ and under this action the group homomorphism $(\partial,\mu)$ is a crossed module. We denote this crossed module by $(T \rtimes M, G \rtimes P, \partial \times \mu)$ and call it \emph{semidirect cross product}.
\end{remark}

\section{Whitehead Sequences}
In this section, we first remind from \cite{MR3579003} the notion of Whitehead sequences in general and we recall the motivating example behind it. Later, we define Whitehead sequences in the category of actions of crossed modules $\AXMod$.

\subsection{Recall on Whitehead Sequences}
Let $\oneC$ be a category and $\oneD$ be a pointed category.  Let $(I,R,J)$ be an ordered triple of functors 
\begin{equation}\label{triple_functors}
\xymatrix@1{\oneC \ar@<1.5ex>[r]^{I} \ar@<-1.5ex>[r]_{J}&\oneD \ar[l]|{R},}
\end{equation}
satisfying $I \circ R =\id_{\oneD}=J \circ R$. 

A \emph{Whitehead sequence} in $\oneC$ with respect to $(I,R,J)$ is a triple $(A,u,v)$ where $A$ is an object of $\oneC$ and $u:A \ra R \circ I(A)$ and $v: R \circ J(A) \ra A$ are morphisms in $\oneC$ satisfying the conditions
\begin{equation}\label{whitehead_sequence}
I(u)=\id_{I(A)}, ~ J(v)=\id_{J(A)}, ~ I(v)=J(u).
\end{equation}

Let us have a look at the case of groups. Let $\oneD$ be the category of groups $\grp$ and let $\oneC$ be the category of group actions on groups $\gact$ whose 
\begin{itemize}
\item objects are pairs $(X,B)$ where $B$ is a group considered as a category with one object and $X$ is a functor $X: B \ra \grp$ which sends the unique object of $G$ to $H$;
\item morphisms $(X_1,B_1) \ra (X_2,B_2)$ are pairs $(f,g)$ where $g:B_1 \ra B_2$ is a group homomorphism and $f:X_1 \ra X_2 \circ g$ is a natural transformation. 
\end{itemize}
Next, we define the functors 
\begin{align*}
I: \xymatrix@1{\gact \ar[r] & \grp},& ~ (X,B) \mapsto B;\\
J: \xymatrix@1{\gact \ar[r]& \grp}, &~ (X,B) \mapsto X_{\circ}:=H;\\
R: \xymatrix@1{\grp \ar[r] & \gact}, &~ B \mapsto (\overline{B},B);
\end{align*}
where the functor $\overline{G}:G \ra \grp$ corresponds to the conjugation action of $G$. It is clear that the triple $(I,R,J)$ satisfies the relations $I \circ R = \id_{\grp} =J \circ R$. In \cite{MR3579003}, it is shown that the ordered triple $(I,R,J)$ is an action system of $\gact$ over $\grp$ and a Whitehead sequence given by a collection $((X,B),u,v)$ with $u:(X,B) \ra (\overline{B},B)$ and $v:(\overline{H},H) \ra (X,B)$ morphisms of actions corresponds to a crossed module $H \ra B$ as dictated by the conditions (\ref{whitehead_sequence}).

\subsection{Whitehead Sequences in $\AXMod$}
We consider the ordered triple of functors $(I,R,J)$ defined as follows:
\begin{align}
\label{asf1}I: \xymatrix@1{\AXMod \ar[r] & \xmod},& ~ (X,(M,P,\mu)) \mapsto (M,P,\mu);\\
\label{asf2}J: \xymatrix@1{\AXMod \ar[r]& \xmod}, &~ (X,(M,P,\mu)) \mapsto X(\circ):=(T,G,\partial);\\
\label{asf3}R: \xymatrix@1{\xmod \ar[r] & \AXMod}, &~ (M,P,\mu) \mapsto (\Con,(M,P,\mu)).
\end{align}
These functors clearly satisfy the relations $I \circ R = \id_{\xmod} =J \circ R$. By definition, a Whitehead sequence in $\AXMod$ with respect to $(I,R,J)$ is a collection $((X,(M,P,\mu)),(U,(h,k)),(V,(f,g)))$ where $(X,(M,P,\mu))$ is an action of $(M,P,\mu)$ on $X(\circ):=(T,G,\partial)$, and $(U,(h,k))$ and $(V,(f,g))$ are the morphisms of actions given by
\begin{align}\label{Whitehead_seq_1}
(V,(f,g)):\xymatrix@1{(\Con,(T,G,\partial)) \ar[r] & (X,(M,P,\mu))}\\
(U,(h,k)):\xymatrix@1{(X,(M,P,\mu)) \ar[r] & (\Con,(M,P,\mu))}
\end{align}
so that $I(U,(h,k))=(h,k)=\id_{(M,P,\mu)}$, $J(V,(f,g))=V_{\circ}=\id_{(T,G,\partial)}$, and $I(V,(f,g))=(f,g)=U_{\circ}=J(U,(h,k))$. To be more precise, a Whitehead sequence in $\AXMod$ is a morphism of crossed modules $(f,g)$
\begin{equation}\label{crossed_square}
\begin{tabular}{c}
\xymatrix@!=1cm{T \ar[r]^{f} \ar[d]_{\partial} & M \ar[d]^{\mu}\\G \ar[r]_{g}& P}
\end{tabular}
\end{equation}
satisfying the relations
\begin{align}
\label{whitehead_1} \sigma_{g(x)}(x')&=xx'x^{-1};\\
\label{whitehead_2} \theta_{g(x)}(t)&={}^xt;\\
\label{whitehead_3} X_{\circ}(f(t),1)(x)&=t\!\ {}^x(t^{-1});\\
\label{whitehead_4} g(\sigma_p(x))&=pg(x)p^{-1};\\
\label{whitehead_5} f(\theta_p(t))&={}^{p}[f(t)];\\
\label{whitehead_6} f(X_{\circ}(m,1)(x))&=m \!\ {}^{g(x)}(m^{-1})
\end{align}
where $(\psi_p,\varphi_p):=\Con_{\circ}(p)$ is same as in Example \ref{ex:conjugation} and $(\theta_p,\sigma_p):=X_{\circ}(p)$. The relations (\ref{whitehead_1}) -- (\ref{whitehead_6}) are translations of the relations  (\ref{1-morphism_of_actions_4}) -- (\ref{1-morphism_of_actions_5}) for $(V,(f,g))$ and $(U,(h,k))$. We claim that the morphism of crossed modules (\ref{crossed_square}) with the relations (\ref{whitehead_1}) -- (\ref{whitehead_6}) is a crossed square. We define a map $\phi: M \times G \ra T$ by $\phi(m,x):=X_{\circ}(m,1)(x)$ and an action of $P$ on $G$ by ${}^px:=\sigma_p(x)$, and on $T$ by ${}^pt:=\theta_p(t)$. The relations (\ref{whitehead_1}) -- (\ref{whitehead_6})  can be rewritten as follows:
\begin{align}
\label{whitehead_1b} {}^{g(x)}x'&=xx'x^{-1};\\
\label{whitehead_2b} {}^{g(x)}t&={}^xt;\\
\label{whitehead_3b} \phi(f(t),x)&=t\!\ {}^x(t^{-1});\\
\label{whitehead_4b} g({}^{p}x)&=pg(x)p^{-1};\\
\label{whitehead_5b} f({}^pt)&={}^{p}[f(t)];\\
\label{whitehead_6b} f(\phi(m,x))&=m \!\ {}^{g(x)}(m^{-1})
\end{align}
We verify the crossed square axioms (CS1) -- (CS9). (\ref{whitehead_5b}) is the $P$-equivariance of $f$. As $(\theta_p,\sigma_p)$ is an automorphism of the crossed module $(T,G,\partial)$, $\partial({}^pt):=\partial(\theta_p(t))=\sigma_p(\partial(t)):={}^p\partial(t)$, therefore $\partial$ is $P$-equivariant, as well (CS1). The relations (\ref{whitehead_1b}) and (\ref{whitehead_4b}) imply that $g$ is a crossed module. The relation $g(\partial({}^pt))=g({}^p(\partial(t)))=pg(\partial(t))p^{-1}$ that follows from (\ref{whitehead_4b})  and from the $P$-equivariance of $\partial$ together with the relation ${}^{g(\partial(t))}t'={}^{\partial(t)}t'={}^tt'$ that follows from (\ref{whitehead_2b}) and the fact that $\partial$ is a crossed module imply that $g \circ \partial$ is a crossed module (CS2). (CS3) is exactly (\ref{whitehead_6b}). (CS5) follows straight from (\ref{whitehead_2b}) and (\ref{whitehead_3b}). As $X_{\circ}(m,1)$ is the morphism of automorphisms $(\id_T,\id_G) \Ra (\theta_{\mu(m)},\sigma_{\mu(m)})$ of the crossed module $(T,G,\partial)$, it satisfies the relations $\partial(X_{\circ}(m,1)(x))x=\sigma_{\mu(m)}(x)$ and $X_{\circ}(m,1)(\partial(t))t=\theta_{\mu(m)}(t)$ which imply (CS4) and (CS6), respectively. (CS7) follows from the definition of the horizontal composition of 2-morphisms of crossed modules $X_{\circ}(m_0,1)*X_{\circ}(m_1,1)$ as given in Lemma \ref{2nd_def_of_horizontal_composition} and (CS8) from (\ref{2_morphism_of_crossed_modules_3}) and (\ref{whitehead_2b}). Since by the proof of Proposition \ref{prop:action_as_functor} $(X_{\circ}(-,1),X_{\circ}(-)):(M,P,\mu) \ra \AAut(T,G,\partial)$ is a morphism of crossed modules, $X_{\circ}({}^pm,1)={}^{X_{\circ}(p)}[X_{\circ}(m,1)]:=\theta_p\circ X_{\circ}(m,1)\circ \sigma_p^{-1}$. Then $\phi({}^pm,{}^px)=X_{\circ}({}^pm,1)({}^px)=\theta_p\circ X_{\circ}(m,1)\circ \sigma_p^{-1} \circ \sigma_p(x)={}^p[X_{\circ}(m,1)(x)]={}^p\phi(m,x)$. Hence, (CS9) and our claim:
\begin{proposition}\label{prop:whitehead_seq}
A Whitehead sequence in $\AXMod$ with respect to $(I,R,J)$ defined by (\ref{asf1}), (\ref{asf2}), and (\ref{asf3}) is a crossed square. 
\end{proposition}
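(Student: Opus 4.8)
The plan is to show that the single morphism of crossed modules $(f,g)$ extracted from a Whitehead sequence, together with suitable $P$-actions and a function $\phi$, satisfies the nine crossed square axioms (CS-1)--(CS-9). First I would unpack the Whitehead data: the conditions (\ref{whitehead_sequence}) force $(h,k)=\id_{(M,P,\mu)}$, $V_{\circ}=\id_{(T,G,\partial)}$ and $(f,g)=U_{\circ}$, so the entire sequence collapses to the commutative square (\ref{crossed_square}) together with the action $(X,(M,P,\mu))$. Unfolding the morphism-of-actions relations (\ref{1-morphism_of_actions_4})--(\ref{1-morphism_of_actions_5}) for both $(V,(f,g))$ and $(U,(h,k))$ then produces exactly the six relations (\ref{whitehead_1})--(\ref{whitehead_6}).

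Next I would fix the auxiliary structure, setting ${}^{p}x:=\sigma_p(x)$ and ${}^{p}t:=\theta_p(t)$ where $(\theta_p,\sigma_p)=X_{\circ}(p)$, and defining $\phi\colon M\times G\to T$ by $\phi(m,x):=X_{\circ}(m,1)(x)$. With this notation the relations (\ref{whitehead_1})--(\ref{whitehead_6}) rewrite as (\ref{whitehead_1b})--(\ref{whitehead_6b}), which already match two axioms verbatim: (CS-3) is (\ref{whitehead_6b}), and (CS-5) combines (\ref{whitehead_2b}) and (\ref{whitehead_3b}).

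Then I would check the remaining axioms by grouping them according to which piece of structure they use. The equivariance statements (CS-1) follow from (\ref{whitehead_5b}) for $f$, and from $(\theta_p,\sigma_p)$ being an \emph{automorphism of the crossed module} $(T,G,\partial)$ (so it commutes with $\partial$) for $\partial$. For (CS-2), that $g$ is a crossed module comes from (\ref{whitehead_1b}) and (\ref{whitehead_4b}), while that $g\circ\partial$ is one follows by combining the equivariance of $\partial$, (\ref{whitehead_4b}), (\ref{whitehead_2b}) and the crossed-module axiom for $\partial$. The genuinely $\phi$-flavored axioms (CS-4), (CS-6), (CS-7), (CS-8) are the heart of the argument: (CS-4) and (CS-6) use that $X_{\circ}(m,1)$ is a \emph{2-morphism} $(\id_T,\id_G)\Rightarrow(\theta_{\mu(m)},\sigma_{\mu(m)})$, hence satisfies $\partial(X_{\circ}(m,1)(x))\,x=\sigma_{\mu(m)}(x)$ and $X_{\circ}(m,1)(\partial(t))\,t=\theta_{\mu(m)}(t)$; (CS-7) is read off the horizontal-composition formula together with Lemma \ref{2nd_def_of_horizontal_composition}, and (CS-8) from the cocycle identity (\ref{2_morphism_of_crossed_modules_3}) and (\ref{whitehead_2b}). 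Finally (CS-9) is obtained from the fact, established in the proof of Proposition \ref{prop:action_as_functor}, that $(X_{\circ}(-,1),X_{\circ}(-))\colon(M,P,\mu)\to\AAut(T,G,\partial)$ is a morphism of crossed modules, so that $X_{\circ}({}^{p}m,1)=\theta_p\circ X_{\circ}(m,1)\circ\sigma_p^{-1}$.

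I expect the main obstacle to be precisely these four ``mixed'' axioms (CS-4), (CS-6), (CS-7), (CS-8): the point is to recognise that $\phi(m,-)=X_{\circ}(m,1)$ is not an arbitrary function but a $2$-cell of crossed modules, so that its derivation and cocycle behaviour, as well as its interaction with $\partial$, are already encoded in the $2$-categorical data of $\Xmod$ rather than requiring verification by hand. Once $\phi$ is identified in this way, the remaining checks are bookkeeping translations of (\ref{whitehead_1b})--(\ref{whitehead_6b}).
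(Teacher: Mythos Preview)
Your proposal is correct and follows essentially the same approach as the paper: the paper's argument, given in the discussion immediately preceding the proposition, unpacks the Whitehead data to the square (\ref{crossed_square}) with relations (\ref{whitehead_1})--(\ref{whitehead_6}), defines $\phi$ and the $P$-actions exactly as you do, and then verifies (CS-1)--(CS-9) using precisely the ingredients you list for each axiom.
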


In Theorem \ref{thm:whitehead_crossed_square}, we will see that the notions of Whitehead sequences in $\AXMod$ and crossed squares are in fact equivalent. Let us define first the category of Whitehead sequences. A \emph{morphism of Whitehead sequences} in $\AXMod$ 
\[((X_1,(M_1,P_1,\mu_1)),(U_1,(h_1,k_1)),(V_1,(f_1,g_1))) \ra ((X_2,(M_2,P_2,\mu_2)),(U_2,(h_2,k_2)),(V_2,(f_2,g_2))),\]
is given by a morphism of actions $(N,(\alpha_M,\alpha_G)):(X_1,(M_1,P_1,\mu_1)) \ra (X_2,(M_2,P_2,\mu_2))$ so that the diagram commutes:
\begin{equation}\label{morph_Whitehead}
\begin{tabular}{c}
\xymatrix@C=1.75cm{(\Con,(T_1,G_1,\partial_1)) \ar[r]^{(V_1,(f_1,g_1))} \ar[d]_{(N',(N_T,N_G))} & (X_1,(M_1,P_1,\mu_1)) \ar[r]^{(U_1,(\id_{M_1},\id_{P_1}))} \ar[d]|{(N,(\alpha_M,\alpha_P))} & (\Con,(M_1,P_1,\mu_1)) \ar[d]^{(N'',(\alpha_M,\alpha_P))} \\
(\Con,(T_2,G_2,\partial_2)) \ar[r]_{(V_2,(f_2,g_2))} & (X_2,(M_2,P_2,\mu_2)) \ar[r]_{(U_2,(\id_{M_2},\id_{P_2}))} & (\Con,(M_2,P_2,\mu_2))}
\end{tabular}
\end{equation}
We observe that a given morphism of Whitehead sequences leads to a commutative diagram of morphisms of crossed modules
\begin{equation}\label{morph_Whitehead_1}
\begin{tabular}{c}
\xymatrix{(T_1,G_1,\partial_1) \ar[r]^{(f_1,g_1)} \ar[d]_{(N_T,N_G)} & (M_1,P_1,\mu_1)\ar[d]^{(\alpha_M,\alpha_P)}\\
(T_2,G_2,\partial_2) \ar[r]_{(f_2,g_2)} & (M_2,P_2,\mu_2)}
\end{tabular}
\end{equation}
so that the morphisms $(N_T,N_G)$ and $(\alpha_M,\alpha_G)$ are compatible with the actions.  Whitehead sequences and their morphisms form a category denoted by $\Wseq$.

\begin{theorem}\label{thm:whitehead_crossed_square}
The category of Whitehead sequences $\Wseq$ in $\AXMod$ and the category of crossed squares $\Xsq$ are isomorphic.
\end{theorem}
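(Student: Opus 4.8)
The plan is to exhibit a pair of functors $\Phi:\Wseq \ra \Xsq$ and $\Psi:\Xsq \ra \Wseq$ and to check that they are mutually inverse, so that the asserted isomorphism follows. The forward functor $\Phi$ is essentially the content of Proposition \ref{prop:whitehead_seq}: on objects it sends a Whitehead sequence to the commutative square (\ref{crossed_square}) equipped with the function $\phi(m,x):=X_{\circ}(m,1)(x)$ and the $P$-actions ${}^px:=\sigma_p(x)$ and ${}^pt:=\theta_p(t)$, which we have already verified satisfies (CS1)--(CS9). On a morphism of Whitehead sequences I would take the underlying diagram (\ref{morph_Whitehead_1}) together with the maps $(N_T,N_G)$; the compatibility of $(N,(\alpha_M,\alpha_P))$ with the actions, recorded in (\ref{1-morphism_of_actions_4})--(\ref{1-morphism_of_actions_5}), translates directly into compatibility with the $P$-actions and with the functions $\phi_1,\phi_2$, which is exactly what is required of a morphism of crossed squares.

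The real work lies in constructing the inverse functor $\Psi$. Given a crossed square (\ref{crossed_square_prep}) with its actions and its function $\phi$, I would recover the action $(X,(M,P,\mu))$ by specifying the crossed module morphism $(\varepsilon,\rho):(M,P,\mu)\ra \AAut(T,G,\partial)$ of (\ref{action_of_crossed_modules}): set $\rho(p):=(\theta_p,\sigma_p)$ with $\theta_p(t):={}^pt$ and $\sigma_p(x):={}^px$, and set $\varepsilon(m):=\phi(m,-)$. The key point is that the $G$-actions on $T$ and on $G$ intrinsic to the crossed modules $\partial$ and $g$ coincide with the $P$-actions restricted along $g$, so that (CS5)/(\ref{whitehead_2b}) and the Peiffer identity (\ref{whitehead_1b}) are built into the data; this consistency is what makes the translation an equivalence rather than a mere implication. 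I would then verify that $\varepsilon(m)$ is a \emph{regular} derivation whose associated automorphism is $(\theta_{\mu(m)},\sigma_{\mu(m)})$---using (CS4) and (CS6) to compute $\sigma(x)={}^{\mu(m)}x$ and $\theta(t)={}^{\mu(m)}t$---that $\varepsilon$ is a group homomorphism by (CS7) and the multiplication rule (\ref{der_multiplication}), that $\rho$ is a homomorphism because $P$ acts by automorphisms, and finally that $(\varepsilon,\rho)$ is a crossed module morphism: the condition $\Delta\circ\varepsilon=\rho\circ\mu$ is the computation just made, while the equivariance $\varepsilon({}^pm)={}^{\rho(p)}\varepsilon(m)$ is precisely (CS9). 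With the action in hand, the morphisms $(U,(\id_M,\id_P))$ and $(V,(f,g))$ are determined by the square, and the defining identities of a Whitehead sequence are exactly (\ref{whitehead_1b})--(\ref{whitehead_6b}).

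On morphisms, $\Psi$ sends a morphism of crossed squares---given by horizontal maps $N_T,N_G,\alpha_M,\alpha_P$ compatible with the actions and with $\phi_1,\phi_2$---to the morphism of actions $(N,(\alpha_M,\alpha_P))$ with $N_{\circ}=(N_T,N_G)$; the compatibility conditions are again (\ref{1-morphism_of_actions_4})--(\ref{1-morphism_of_actions_5}) read in the opposite direction, and they guarantee that the square (\ref{morph_Whitehead}) commutes. The hardest step is the verification in the previous paragraph that $\varepsilon=\phi(-,-)$ genuinely lands in regular derivations and assembles into a crossed module morphism into the actor, i.e. that the whole packet (CS1)--(CS9) is equivalent to, and not merely sufficient for, the action-system data; once this dictionary is established in both directions the two constructions are visibly inverse on the nose, since $\Phi$ reads off $\phi$ and the $P$-actions from $X$ while $\Psi$ rebuilds $X$ from exactly those, and similarly on morphisms. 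This yields an isomorphism, not just an equivalence, of $\Wseq$ with $\Xsq$.
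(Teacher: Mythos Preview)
Your proposal is correct and follows essentially the same approach as the paper: construct mutually inverse functors, with $\Phi$ given by Proposition~\ref{prop:whitehead_seq} and $\Psi$ built by defining $\rho(p)=(\theta_p,\sigma_p)$ via the $P$-actions and $\varepsilon(m)=\phi(m,-)$, then verifying the Whitehead identities (\ref{whitehead_1b})--(\ref{whitehead_6b}) from the crossed-square axioms. The only minor difference is that the paper delegates the verification that $(\varepsilon,\rho)$ is a crossed module morphism into the actor to \cite[Theorem~3]{MR1087375}, whereas you spell out directly which axioms (CS4), (CS6), (CS7), (CS9) drive each step; your treatment of the morphism level and of the mutual-inverse check is also more explicit than the paper's, but the underlying argument is the same.
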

\begin{proof}
The discussion before the Theorem explains the construction of a functor from $\Wseq$ to $\Xsq$. To complete the proof, we shall construct a functor from $\Xsq$ to $\Wseq$. That is,  assume given a crossed square by the diagram (\ref{crossed_square_prep}) with map $\phi: M \times G \ra T$, we shall construct $(X,(M,P,\mu))$ an action of $(M,P,\mu)$ on $X(\circ)=(T,G,\partial)$ and we shall show that
\begin{align}
\label{from_crossed_sq_to_Whitehead_seq_1}(V,(f,g)):\xymatrix@1{(\Con,(T,G,\partial)) \ar[r] & (X,(M,P,\mu))}\\
\label{from_crossed_sq_to_Whitehead_seq_2}(U,(\id_M,\id_P)):\xymatrix@1{(X,(M,P,\mu)) \ar[r] & (\Con,(M,P,\mu))}
\end{align}
with $V_{\circ}=(\id_T,\id_G,\partial)$ and $U_{\circ}=(f,g)$ are morphisms of actions.

From \cite{MR1087375}[Theorem 3], we know that $(M,P,\mu)$ acts on $(T,G,\partial)$. This action is given by the crossed square morphism $(\varepsilon,\rho):(M,P,\mu) \ra (D(G,T),\Aut(T,G,\partial),\Delta)$. For any $p \in P$,  $\rho(p)=(\theta_p,\sigma_p) \in \Aut(T,G,\partial)$ is defined by $\theta_p(t)={}^pt$ and $\sigma_p(x)={}^px$. For any $m \in M$, $\varepsilon(m): G \ra T$ is defined by $\varepsilon(m)(x)=\phi(m,x)$. We can describe this action using a strict 2-functor $X: \underline{\underline{(M,P,\mu)}} \ra \Xmod$ whose construction is depicted in the proof of Proposition \ref{prop:action_as_functor}: $X$ sends the unique object $\circ$ of $\underline{\underline{(M,P,\mu)}}$ to $(T,G,\partial)$, a morphism $p$ to $(\theta_p,\sigma_p)$ an automorphism of $(T,G,\partial)$, and a 2-morphism $(m,p)$ to $\varepsilon(m) \circ \sigma_p$ a 2-morphism of automorphisms $(\theta_p,\sigma_p) \Ra (\theta_{\mu(m)p},\sigma_{\mu(m)p})$.

Showing that (\ref{from_crossed_sq_to_Whitehead_seq_1}) and (\ref{from_crossed_sq_to_Whitehead_seq_2}) are morphisms of actions comes down to showing that the collection of relations (\ref{whitehead_1b}) -- (\ref{whitehead_6b}) hold: (\ref{whitehead_1b}) and  (\ref{whitehead_2b}) are the consequences of (CS-1) and (CS-2). Similarly so are (\ref{whitehead_4b}) and (\ref{whitehead_5b}). (\ref{whitehead_3b}) follows from (CS-5), where (\ref{whitehead_6b}) from (CS-3).
\end{proof}

\section{Action Systems}
In this section, we remind from \cite{MR3579003} the definition of action systems. We show that the ordered triple $(I,R,J)$ defined in Proposition \ref{prop:whitehead_seq} is an action system of $\AXMod$ over $\xmod$ which satisfies the conditions of Theorem \ref{thm_intro}. 
\subsection{Recall on Action Systems}
Let $(I,R,J)$ be the ordered triple of functors (\ref{triple_functors}). A morphism $\alpha: A \ra B$ in $\oneC$ is called $I$-\emph{cartesian} if for any morphism $\alpha':A'\ra B$ in $\oneC$ and any morphism $f: I(A') \ra I(A)$ in $\oneD$ with $I(\alpha) \circ f=I(\alpha')$, there exists a unique morphism $\beta:A' \ra A$ in $\oneD$ so that $\alpha \circ \beta=\alpha'$. 

A \emph{patch} in $\oneD$ is a cospan $X \overset{k}{\ra} Z \overset{s}{\la} Y$ so that $k$ and $s$ are jointly epimorphic, and there exists a morphism $p: Z \ra Y$ satisfying $p \circ s=\id_Y$ and $p \circ k = 1$. We represent the above patch by the collection $(X,Y,Z,k,s,p)$. A patch $(X,Y,Z,k,s,p)$ is called \emph{exact} if the morphism $k:X \ra Z$ is the kernel of the morphism $p: Z \ra Y$. An $(I,J)-$\emph{organic morphism} is a morphism $\alpha: A\ra B$ in $\oneC$ so that $I(B) \cong J(B)$ and the cospan 
\begin{equation*}
J(\alpha):J(A) \ra J(B) \cong I(B) \la I(A):I(\alpha),
\end{equation*}
is an exact patch in $\oneD$.

The triple $(I,R,J)$ is an \emph{action system} of $\oneC$ over $\oneD$ if the following conditions are satisfied:
\begin{enumerate}[(\textrm{AS}-1)]
\item \label{AS_1} the functor $I$ is a fibration, that is for any object $B$ in $\oneC$ and any morphism $f:X \ra I(B)$ in $\oneD$ there exists a cartesian morphism  $\alpha:A \ra B$ in $\oneC$ so that $I(\alpha)=f$, and if $\alpha$ is a $I$-cartesian, then $J(\alpha)$ is an isomorphism in $\oneD$;
\item \label{AS_2} for every object $A$ in $\oneC$, there exists an object $Y$ in $\oneD$ and an $(I,J)$-organic morphism $f:A \ra R(Y)$ in $\oneC$ which is also universal;
\item \label{AS_3}the \emph{L-condition} holds, that is for every diagram of solid arrows
\begin{equation}\label{L_condition}
\begin{tabular}{c}
\xymatrix{RJ(E) \ar@{-->}[r]^{g'} \ar[dr]_{g} & E \ar@<-0.75ex>[d]_{\alpha}\ar@<0.75ex>@{<-}[d]^{\beta} \ar@{-->}[r]^{f'} &RI(E) \\&A \ar[ur]_{f}&}
\end{tabular}
\end{equation}
satisfying the conditions $I(\beta)=I(f)$, $J(\alpha)=J(g)$, $I(\alpha) \circ J(f)=I(g) \circ J(\beta)$ and $\alpha \circ \beta = \id_A$, if $\alpha$ is cartesian and $f$ is organic, then there exists a unique Whitehead sequence $(f',g')$ such that $\alpha \circ g'=g$ and $f'\circ \beta=f$.
\end{enumerate} 
\subsection{Action System of $\AXMod$ over $\xmod$}
We verify that the ordered triple $(I,R,J)$ defined by (\ref{asf1}), (\ref{asf2}), and (\ref{asf3}) satisfy the conditions (AS-\ref{AS_1}), (AS-\ref{AS_2}), and (AS-\ref{AS_3}) of an action system. Let us first define the $I$-cartesian morphisms in $\AXMod$ and the $(I,J)$-organic morphisms in $\xmod$. We say that the cospan of morphisms of crossed modules
\begin{equation}\label{diag:cospan}
\begin{tabular}{c}
\xymatrix{(X_1,X_2,\delta_X) \ar[r]^{(k_1,k_2)} & (Y_1,Y_2,\delta_Y) & (Z_1,Z_2,\delta_Z) \ar[l]_{(s_1,s_2)}}
\end{tabular}
\end{equation}
is 
\begin{itemize}
\item \emph{jointly epimorphic} if the cospans $(k_1,s_1)$ and $(k_2,s_2)$ are jointly epimorphic,
\item a \emph{patch} if it is jointly epimorphic, and there exists a morphism of crossed modules \\ $(p_1,p_2):(Y_1,Y_2,\delta_Y) \ra (Z_1,Z_2,\delta_Z)$ so that $(p_1\circ k_1, p_2\circ k_2)=(1,1)$ and $(p_1 \circ s_1, p_2 \circ s_2)=\id_{(Z_1,Z_2,\delta_Z)}$,
\item an \emph{exact patch} if it is a patch and $(k_1,k_2)$ is the kernel of $(p_1,p_2)$.
\end{itemize}

Let $(X_i,(M_i,P_i,\mu_i))$ be an action of $(M_i,P_i,\mu_i)$ on $(T_i,G_i,\partial_i)$ for $i=1,2$. A morphism of actions $(N,(\alpha_M,\alpha_P)):(X_1,(M_1,P_1,\mu_1)) \ra (X_2,(M_2,P_2,\mu_2))$ is $(I,J)$-organic if 
\begin{itemize}
\item there exists $(r,q): (T_2,G_2,\partial_2)\ra (M_2,P_2,\mu_2)$ an isomorphism of crossed modules;
\item the cospan 
\begin{equation}\label{diag:cospan_action}
\begin{tabular}{c}
\xymatrix@1{(T_1,G_1,\partial_1) \ar[r]^{N_{\circ}} & (T_2,G_2,\partial_2) \ar[r]^{(r,q)}_{\simeq} & (M_2,P_2,\mu_2) &\ar[l]_{(\alpha_M,\alpha_P)} (M_1,P_1,\mu_1)}
\end{tabular}
\end{equation}
is an exact patch.
\end{itemize}

\textit{Verification of (AS-\ref{AS_1})}: Let $(X_2,(M_2,P_2,\mu_2))$ be an action and $(\alpha_M,\alpha_P):(M_1,P_1,\mu_1) \ra (M_2,P_2,\mu_2)$ be a morphism of crossed modules. Then $(\alpha_M,\alpha_P)$ can be lifted to $(1,(\alpha_M,\alpha_P))$ a cartesian morphism of actions $(X_2 \circ A,(M_2,P_2,\mu_2)) \ra (X_2,(M_2,P_2,\mu_2))$ where $A$ is the functor associated to $(\alpha_M,\alpha_P)$. In fact, given a morphism of actions $(N,(\gamma_M,\gamma_P)):(X_3,(M_3,P_3,\mu_3))\ra (X_2,(M_2,P_2,\mu_2)$ and a morphism of crossed modules $(\beta_M,\beta_P):(M_3,P_3,\mu_3) \ra (M_1,P_1,\mu_1)$ so that $(\alpha_M,\alpha_P) \circ (\beta_M,\beta_P)=(\gamma_M,\gamma_G)$, there exists a morphism of actions $(N,(\beta_M,\beta_P)):(X_3,(M_3,P_3,\mu_3)) \ra (X_2 \circ A,(M_1,P_1,\mu_1))$ so that $(1,(\alpha_M,\alpha_P)) \circ (N,(\beta_M,\beta_P))=(N,(\gamma_M,\gamma_G))$. 

Moreover, let $(N,(\alpha_M,\alpha_P)):(X_1,(M_1,P_1,\mu_1)) \ra (X_2,(M_2,P_2,\mu_2))$ be a cartesian morphism. Then there exists a unique action morphism $(H,(\beta_M,\beta_P)):(X_2,(M_2,P_2,\mu_2)) \ra (X_1,(M_1,P_1,\mu_1))$ so that $(H,(\beta_M,\beta_P)) \circ (N,(\alpha_M,\alpha_P))=(1,\id_{(M_2,P_2,\mu_2)})$. Therefore, $J((H,(\beta_M,\beta_P)) \circ (N,(\alpha_M,\alpha_P)))=J(1,\id_{(M_2,P_2,\mu_2)})$, that is $J((H,(\beta_M,\beta_P))) \circ J((N,(\alpha_M,\alpha_P)))=\id$. Hence, $J((N,(\alpha_M,\alpha_P)))$ is an isomorphism.

\textit{Verification of (AS-\ref{AS_2})}: Let $(X,(M,P,\mu))$ define an action of $(M,P,\mu)$ on $X_{\circ}=(T,G,\partial)$ and $(T \rtimes M, G \rtimes P, \partial \times \mu)$ be the crossed module defined in Remark \ref{rem:semi_direct_cross_product}. We claim that the morphism of actions
\begin{equation}\label{canonical_injections}
(N,(\lambda_M,\lambda_P)):\xymatrix@1{(X,(M,P,\mu)) \ar[r] &  (\Con,(T \rtimes M, G \rtimes P, \partial \times \mu)),}
\end{equation}
where $(\lambda_M,\lambda_P)$ is the canonical injection $(M,P,\mu) \ra (T \rtimes M, G \rtimes P, \partial \times \mu)$, $\Lambda$ is the functor associated to $(\lambda_M,\lambda_P)$, and $N: X \Ra \Con \circ \Lambda$ is the transformation that sends the only object $\circ$ of $\underline{\underline{(M,P,\mu)}}$ to $(\gamma_T,\gamma_G)$ the canonical injection $(T,G,\partial) \ra (T \rtimes M, G \rtimes P, \partial \times \mu)$ is organic. First, we shall show that $N$ is strict. To that end, it is enough to prove that the diagram 
\begin{equation}\label{diag:semi_strict_product}
\begin{tabular}{c}
\xymatrix{\underline{(M,P,\mu)} \ar[r]^{X_{\circ}} \ar[d]_{\Con_{\circ} \circ \Lambda} & \AAut(T,G,\partial) \ar[d]^{(\gamma_T,\gamma_G) \circ -} \\
\AAut(T \rtimes M,G \rtimes P,\partial\times \mu) \ar[r]_{-\circ(\gamma_T,\gamma_G)}& **[r]\Hom((T,G,\partial),(T \rtimes M,G \rtimes P,\partial\times \mu))}
\end{tabular}
\end{equation}
commutes. Let  $p$ be an object in $\underline{(M,P,\mu)}$. $\Con_{\circ} \circ \Lambda$ sends $p$ to the automorphisms $\psi_{(1,p)}:T \rtimes M \ra T \rtimes M$ and $\varphi_{(1,p)}: G \rtimes P \ra G \rtimes P$ which are defined at $(t,1)$ for any $t \in T$ by: 
\begin{align*}
\psi_{(1,p)}(t,1)&={}^{(1,p)}(t,1)=({}^pt,1),\\
\varphi_{(1,p)}(x,1)&=(1,p)(x,1)(1,p^{-1})=({}^px,1).
\end{align*}
These definitions follow from Example \ref{ex:conjugation} and from the action (\ref{action_of_cross_product}). We note that the notations ${}^pt$ and ${}^px$ mean $\theta_p(t)$ and $\sigma_p(x)$, respectively, where $X_{\circ}(p)=(\theta_p,\sigma_p)$. Hence, the commutativity of (\ref{diag:semi_strict_product}). Showing that (\ref{canonical_injections}) is an organic morphism comes down to showing that the cospan diagram
\begin{equation}
\begin{tabular}{c}
\xymatrix@!=1cm{T \ar[r]^(0.4){\gamma_T} \ar[d]_{\partial} & T \rtimes M \ar[d]|{\partial\times \mu}&M\ar[d]^{\mu} \ar[l]_(0.4){\lambda_M}\\
G \ar[r]_(0.4){\gamma_G}&G\rtimes P&P \ar[l]^(0.4){\lambda_P}}
\end{tabular}
\end{equation}
in $\xmod$ is an exact patch. We only detail the fact that $(\gamma_T,\gamma_G)$ and $(\lambda_M,\lambda_P)$ are jointly epimorphic leaving the rest to the reader. Let $(E,F,\delta)$ be a crossed module and  
\begin{equation}\label{jointly_epimorphic}
\begin{tabular}{c}
\xymatrix@!=1cm{T \rtimes M \ar[d]_{\partial_\times \mu}\ar@<0.5ex>[r]^{\kappa_1} \ar@<-0.5ex>[r]_{\iota_1} & E \ar[d]^{\delta}\\
G \rtimes P\ar@<0.5ex>[r]^{\kappa_2} \ar@<-0.5ex>[r]_{\iota_2} & F}
\end{tabular}
\end{equation}
be crossed module morphisms so that $(\kappa_1,\kappa_2) \circ (\gamma_T,\gamma_G)=(\iota_1,\iota_2) \circ (\gamma_T,\gamma_G)$ and $(\kappa_1,\kappa_2) \circ (\lambda_M,\lambda_P)=(\iota_1,\iota_2) \circ (\lambda_M,\lambda_P)$. Then for any $(t,m) \in T \rtimes M$
\begin{align*}
\kappa_1(t,m)&=\kappa_1((t,1)(1,m))=\kappa_1(\gamma_T(t)\lambda_M(m))=\kappa_1(\gamma_T(t))\kappa_1(\lambda_M(m))=\iota_1(\gamma_T(t))\iota_1(\lambda_M(m))=\iota_1(t,m).
\end{align*}
Similarly, we show that $\kappa_2=\iota_2$.
 
\textit{Verification of (AS-\ref{AS_3})}: In \cite{MR3579003}, it is noted that in semi-abelian categories the L-condition is equivalent to the ``Smith is Huq" condition defined in \cite{MR2899723}. As the category of crossed modules is semi-abelian, proving the condition (AS-3) is equivalent to proving that the category of crossed modules has (SH) property. In fact, as the category of crossed modules is algebraically coherent by \cite[Proposition 4.18]{zbMATH06520725}  and all algebraically coherent categories have (SH) property (see \cite[Theorem 6.18] {zbMATH06520725}), the category of crossed modules has (SH), as well. In this paper, we verify this condition by proving in detail the existence of the non solid arrows in the diagram below:

\begin{equation}\label{L_condition_2}
\begin{tabular}{c}
\xymatrix@R=1.5cm@C=2cm{(\Con,(T,G,\partial)) \ar@{-->}[r]^(0.4){(N_3',(f',g'))} \ar[dr]_{(N_3,(f,g))}& (X',(M',P',\mu')) \ar@{-->}[r]^(0.55){(N_4',(q_M',q_P'))} \ar@<-0.75ex>[d]_(0.45){(N_1,(r_M,r_P))}\ar@<0.75ex>@{<-}[d]^(0.45){(N_2,(s_M,s_P))} & (\Con,(M',P',\mu'))\\
&(X,(M,P,\mu)) \ar[ur]_{(N_4,(q_M,q_P))}&}
\end{tabular}
\end{equation}
with conditions
\begin{enumerate}[(L-1)]
\item\label{L1}$I(N_2,(s_M,s_P))=I(N_4,(q_M,q_P))$;
\item \label{L2}$J(N_1,(r_M,r_P))=J(N_3,(f,g))$;
\item \label{L3}$I(N_1,(r_M,r_P)) \circ J(N_4,(q_M,q_P))=I(N_3,(f,g))\circ J(N_2,(s_M,s_P))$;
\item \label{L4}$(N_1,(r_M,r_P)) \circ (N_2,(s_M,s_P))=\id_{(X,(M,P,\mu))}$;
\item \label{L5}$(N_1,(r_M,r_P))$ is cartesian;
\item \label{L6}$(N_4,(q_M,q_P))$ is organic.
\end{enumerate}
The diagram (\ref{L_condition_2}) is the diagram (\ref{L_condition}) with $A=(X,(M,P,\mu))$ the action of $(M,P,\mu)$ on $(T,G,\partial)$ and $E=(X',(M',P',\mu'))$ the action of $(M',P',\mu')$ on $(T',G',\partial')$. As $(N_1,(r_M,r_P))$ is cartesian, it can be lifted to the morphism $(\id,(r_M,r_P)):(X \circ R,(M',P',\mu')) \ra (X,(M,P,\mu))$ where $R$ is the strict 2-functor associated to $(r_M,r_P)$. From (L-\ref{L2}), $(N_3)_{\circ}=\id_{(T,G,\partial)}$. From (L-\ref{L4}), $N_2=\id$ and $(r_M,r_P) \circ (s_M,s_P)=(\id_M,\id_P)$. From (L-\ref{L1}), $(s_M,s_P)=(q_M,q_P)$. From (L-\ref{L3}), $(r_M,r_P) \circ (\kappa,\iota) =(f,g)$ where $(N_4)_{\circ}=(\kappa,\iota):(T,G,\partial) \ra (M',P',\mu')$. In summary, under the conditions (L-1) -- (L-6), the diagram (\ref{L_condition_2}) becomes
\begin{equation*}\label{L_condition_3}
\begin{tabular}{c}
\xymatrix@R=1.5cm@C=2cm{(\Con,(T,G,\partial)) \ar@{-->}[r]^(0.4){(N_3,(\kappa,\iota))} \ar[dr]_{(N_3,(f,g))}& (X \circ R,(M',P',\mu')) \ar@{-->}[r]^(0.55){(N_4,(\id_{M'},\id_{P'})} \ar@<-0.75ex>[d]_{(\id,(r_M,r_P))}\ar@<0.75ex>@{<-}[d]^{(\id,(s_M,s_P))} & (\Con,(M',P',\mu'))\\
&(X,(M,P,\mu)) \ar[ur]_{(N_4,(s_M,s_P))}&}
\end{tabular}
\end{equation*}
where $(r_M,r_P) \circ (\kappa,\iota)=(f,g)$, $(r_M,r_P) \circ (s_M,s_P)=(\id_M,\id_P)$, and $(N_4)_{\circ}=(\kappa,\iota)$.

We discuss in details why the morphism $(N_4,(\id_{M'},\id_{P'}))$ is in $\AXMod$. In order $(N_4,(\id_{M'},\id_{P'}))$ to be a morphism in $\AXMod$,  the diagram
\begin{equation}\label{1-morphism_of_actions_2bis}
\begin{tabular}{c}
\xymatrix{\underline{(M',P',\mu')} \ar[d]_{\Con_{\circ}} \ar[r]^{X_{\circ} \circ R}& \underline{\AAut(T,G,\partial)} \ar[d]^{(\kappa,\iota) \circ -} \\
\underline{\AAut(M',P',\mu')} \ar[r]_{-\circ (\kappa,\iota)}& **[r]\Hom((T,G,\partial),(M',P',\mu'))}
\end{tabular}
\end{equation}
which is the diagram (\ref{1-morphism_of_actions_2}) adapted to our case, should commute.  That is, the relations
\begin{align}
\label{as1} \kappa \circ \theta_{r_P(p')}(t)&=\psi_{p'}\circ \kappa(t)\mathrm{~for~ all~} p' \in P', t \in T;\\
\label{as2} \iota \circ \sigma_{r_P(p')}(x)&=\varphi_{p'}\circ \iota(x)\mathrm{~for~ all~} p' \in P', x \in G;\\
\label{as3} \kappa \circ X_{\circ}(r_M(m')) \circ \sigma_{r_P(p')}(x)&=\Con_{\circ}(m') \circ \sigma_{p'} \circ \iota(x)\mathrm{~for~ all~} p' \in P', m'\in M', x \in G;
\end{align}
where $\Con_{\circ}(p')=(\psi_{p'},\varphi_{p'})$ as in Example (\ref{ex:conjugation}) and $X_{\circ}(p')=(\theta_{p'},\sigma_{p'})$ which are obtained as a result of tracing objects and morphisms in the diagram (\ref{1-morphism_of_actions_2bis}) should hold. For greater eligibility, in the rest of the calculations, we write for any $x \in G$, $t \in T$, and $p \in P$, $^{p}x=\sigma_p(x)$ and $^{p}t=\theta_p(t)$.  

Before we verify the relations (\ref{as1}), (\ref{as2}), and (\ref{as3}), we note the following technical points. From the fact that the morphism $(N_4,(s_M,s_P))$ is an organic morphism, we can identify the crossed module $(M',P',\mu')$ by the direct product $(T \rtimes M, G \rtimes P, \partial \times \mu)$ and the morphism $(N_4,(s_M,s_P))$ by the canonical injections. Therefore, any $p' \in P'$ can be written as $(y,q) \in G \rtimes P$ and any $m \in M'$ as $(u,n) \in T \rtimes M$. We define $r_P(y,q):=g(y)q$ for any $(y,q) \in G \rtimes P$ and $r_M(u,n):=f(u)n$ for any $(u,n) \in T \rtimes M$. The other technical point is that for any $m \in M$, the inverse of the regular derivation $X_{\circ}(m,1)$ is the regular derivation $^{m}X_{\circ}(m^{-1},1)$ and therefore for any $x \in G$, $[X_{\circ}(m,1)(x)]^{-1}={}^{m}[X_{\circ}(m^{-1},1)(x)]=X_{\circ}(m^{-1},1)({}^mx)$.
 
Now, we are ready to show that the relations (\ref{as1}), (\ref{as2}), and (\ref{as3}) are satisfied. Verifications of (\ref{as1}) and (\ref{as2}) are similar. So we only provide details for (\ref{as1}): 
\begin{equation*}
\kappa \circ \theta_{r_P((y,q))}(t)=({}^{g(y)q}t,1)=({}^{yq}t,1)={}^{(y,q)}(t,1)=\psi_{(y,q)}\circ \kappa(t).
\end{equation*}
To verify (\ref{as3}), we observe that the left hand side of (\ref{as3}) \begin{align*}
 \kappa \circ X_{\circ}(r_M((u,n)),1) \circ \sigma_{r_P((y,q))}(x)&=\left(X_{\circ}(f(u)n,1)({}^{g(y)q}x),1\right)\\
&=\left(X_{\circ}(f(u),1)*X_{\circ}(n,1)({}^{g(y)q}x),1\right)\\
&=\left(X_{\circ}(f(u),1)({}^{\mu(n)g(y)q}x)X_{\circ}(n,1)({}^{g(y)q}x),1\right) (\mathrm{from ~ (\ref{horizontal_composition})}) \\
&=\left(\Con_{\circ}(u,1)({}^{\mu(n)g(y)q}x)X_{\circ}(n,1)({}^{g(y)q}x),1\right) ((N_3,(f,g))~\mathrm{is~a~morphism~of~action})\\ 
&=\left(u{}^{({}^{\mu(n)g(y)q}x)}u^{-1}X_{\circ}(n,1)({}^{g(y)q}x),1\right) ~(\mathrm{definition~of~}\Con_{\circ})
 \end{align*}
 and the right hand side of (\ref{as3})
 \begin{align*}
 \Con_{\circ}((u,n)) \circ \sigma_{(y,q)} \circ \iota(x)&=\left(u,n\right)\!\ ^{({}^{g(y)q}x,1)}\left({}^{n^{-1}}u^{-1},n^{-1}\right) (\mathrm{definition~of~}\Con_{\circ})\\
&=\left(u,n\right)\left({}^{{}^{g(y)q}x}({}^{n^{-1}}u^{-1})\left[X_{\circ}(n^{-1},1)({}^{g(y)q}x)\right]^{-1},n^{-1}\right) (\mathrm{from~(\ref{action_of_cross_product})})\\
&=\left(u{}^{({}^{\mu(n)g(y)q}x)}u^{-1}\left[X_{\circ}(n^{-1},1)({}^{\mu(n)g(y)q}x)\right]^{-1},1\right)~(\mathrm{operation~in~} T\rtimes M)\\
&=\left(u{}^{({}^{\mu(n)g(y)q}x)}u^{-1}X_{\circ}(n,1)({}^{g(y)q}x),1\right)~(\mathrm{inverse~of~a~derivation})
\end{align*}
 are equal. This proves that:

\begin{proposition}\label{prop:action_system}
The ordered triple $(I,R,J)$ defined by (\ref{asf1}), (\ref{asf2}), and (\ref{asf3}) is an action system in $\AXMod$ over $\xmod$.
\end{proposition}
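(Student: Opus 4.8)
The plan is to unwind the definition of an action system and to verify, one at a time, its three constituent conditions (AS-\ref{AS_1}), (AS-\ref{AS_2}), and (AS-\ref{AS_3}) for the triple $(I,R,J)$ of (\ref{asf1}), (\ref{asf2}), (\ref{asf3}). Throughout I would use the explicit descriptions of $I$-cartesian morphisms in $\AXMod$ and of $(I,J)$-organic morphisms in $\xmod$ recorded above, together with the semidirect cross product of Remark \ref{rem:semi_direct_cross_product}.

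For (AS-\ref{AS_1}), I first exhibit $I$ as a fibration. Given an action $(X_2,(M_2,P_2,\mu_2))$ and a crossed-module morphism $(\alpha_M,\alpha_P)$ with codomain $(M_2,P_2,\mu_2)$, the natural cartesian lift is obtained by precomposing the 2-functor $X_2$ with the 2-functor $A$ associated to $(\alpha_M,\alpha_P)$; this yields the action $(X_2\circ A,(M_1,P_1,\mu_1))$ and the candidate morphism $(1,(\alpha_M,\alpha_P))$. I would then check the universal property: any action morphism factoring through $(\alpha_M,\alpha_P)$ on the base induces a unique comparison morphism, forced because its transformation component is pinned down by $X_2\circ A$. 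For the second clause I apply $J$ to the retraction witnessing cartesianness, which immediately shows that $J$ carries cartesian morphisms to isomorphisms.

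For (AS-\ref{AS_2}), the universal organic morphism out of $(X,(M,P,\mu))$ is the canonical injection (\ref{canonical_injections}) into the conjugation action on $(T\rtimes M,G\rtimes P,\partial\times\mu)$. I would split the verification in two: first, that the comparison transformation $N$ is \emph{strict}, which reduces to the commutativity of (\ref{diag:semi_strict_product}) and is settled by evaluating the conjugation automorphisms $\psi_{(1,p)}$ and $\varphi_{(1,p)}$ on the images of $T$ and $G$ and matching them with $\theta_p$ and $\sigma_p$; second, that the induced cospan is an \emph{exact patch} in $\xmod$, i.e.\ jointly epimorphic, a patch, and with $(\gamma_T,\gamma_G)$ a kernel. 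The only delicate point is joint epimorphy, dispatched by the standard argument that two crossed-module morphisms out of a semidirect product agreeing on both canonical injections must coincide.

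The main obstacle is (AS-\ref{AS_3}), the L-condition. There are two routes. The conceptual one invokes the observation of \cite{MR3579003} that in a semi-abelian category the L-condition is equivalent to the ``Smith is Huq'' condition of \cite{MR2899723}; since $\xmod$ is semi-abelian and algebraically coherent by \cite[Proposition 4.18]{zbMATH06520725}, and algebraically coherent categories satisfy (SH) by \cite[Theorem 6.18]{zbMATH06520725}, the condition holds automatically. The explicit route, which I would carry out for transparency, constructs the dashed arrows in (\ref{L_condition_2}): using (L-\ref{L1})--(L-\ref{L5}) one collapses the diagram so that $(M',P',\mu')$ is identified with the semidirect cross product and the vertical maps with the canonical injections, after which the sole remaining task is to show that $(N_4,(\id_{M'},\id_{P'}))$ is a morphism of actions, i.e.\ that (\ref{as1}), (\ref{as2}), and (\ref{as3}) hold. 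I expect (\ref{as3}) to be the crux: both sides are expanded by means of the horizontal-composition formula (\ref{horizontal_composition}), the definition of $\Con_{\circ}$, the cross-product action (\ref{action_of_cross_product}), and the inverse-of-a-regular-derivation identity $[X_{\circ}(m,1)(x)]^{-1}=X_{\circ}(m^{-1},1)({}^{m}x)$, and the real content lies in seeing that these two group-theoretic computations agree. Collecting the three verified conditions then establishes that $(I,R,J)$ is an action system.
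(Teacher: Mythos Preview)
Your proposal is correct and follows essentially the same route as the paper: the same cartesian lift $(1,(\alpha_M,\alpha_P))$ for (AS-\ref{AS_1}), the same canonical injection into the conjugation action on the semidirect cross product for (AS-\ref{AS_2}) with strictness checked via (\ref{diag:semi_strict_product}) and joint epimorphy singled out, and for (AS-\ref{AS_3}) both the conceptual reduction to (SH) via algebraic coherence and the explicit verification of (\ref{as1})--(\ref{as3}), with (\ref{as3}) handled precisely via (\ref{horizontal_composition}), (\ref{action_of_cross_product}), and the inverse-derivation identity. One small slip: the identification of $(M',P',\mu')$ with the semidirect cross product uses the organicity hypothesis (L-\ref{L6}), not just (L-\ref{L1})--(L-\ref{L5}).
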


Let $(f,g): (T,G,\partial) \ra (M,P,\mu)$ be a split epimorphism in $\xmod$ with the section $(q,r):(M,P,\mu) \ra (T,G,\partial)$. We claim that $(M,P,\mu)$ acts on $(\ker f,\ker g, \overline{\partial})$ where $\overline{\partial}$ is the restriction of $\partial$ over $\ker f$. This action is given by the crossed module morphism
\begin{equation}\label{from_point_to_action}
\begin{tabular}{c}
\xymatrix@!=1cm{M \ar[d]_{\mu} \ar[r]^{\varepsilon} & **[r]D(\ker f,\ker g)\ar[d]^{\Delta}\\P \ar[r]_{\rho} & **[r]\Aut(\ker f,\ker g,\overline{\partial})}
\end{tabular}
\end{equation}
where for any $m \in M$, $\varepsilon(m):\ker g \ra \ker f, x \mapsto q(m) \!\ {}^{x}q(m)^{-1}$ and for any $p \in P$, $\rho(p)=(\theta_p,\sigma_p)$ with $\theta_p:T \ra T, t \mapsto ^{r(p)}t$ and $\sigma_p: G \ra G, x \mapsto ^{r(p)}x$.

Reciprocally, given an action of $(M,P,\mu)$ on $(T,G,\partial)$, we can form the semidirect product crossed module $(T \rtimes M,G \rtimes P, \partial \times \mu)$ as described in Remark \ref{rem:semi_direct_cross_product}. Then the  projection of the semidirect product on to $(M,P,\mu)$
\begin{equation}\label{from_action_to_point}
\begin{tabular}{c}
\xymatrix@!=1cm{T \rtimes M \ar[d]_{\partial_\times \mu}\ar[r] & M \ar[d]^{\mu}\\
G \rtimes P\ar[r] & P}
\end{tabular}
\end{equation}
is a split epimorphism with the section being canonical injection. This gives an isomorphism between the category of points in $\xmod$ and $\AXMod$. Moreover, the functor $R: \xmod \ra \AXMod$ in (\ref{asf3}) has left adjoint $L: \AXMod \ra \xmod$ defined by $(X,(M,P,\mu)) \mapsto (T \rtimes M,G\rtimes P,\partial \times \mu)$ which is compatible with the action system. Hence, we deduce Theorem \ref{thm:whitehead_crossed_square} as a consequence of Theorem \ref{thm_intro}.

\bibliographystyle{plain}

\end{document}